\numberwithin{equation}{section}
\newtheoremstyle{thmlemcorr}{10pt}{10pt}{\itshape}{}{\bfseries}{.}{10pt}{{\thmname{#1}\thmnumber{ #2}\thmnote{ (#3)}}}
\newtheoremstyle{thmlemcorr*}{10pt}{10pt}{\itshape}{}{\bfseries}{.}\newline{{\thmname{#1}\thmnumber{ #2}\thmnote{ (#3)}}}
\newtheoremstyle{remexample}{10pt}{10pt}{}{}{\bfseries}{.}{10pt}{{\thmname{#1}\thmnumber{ #2}\thmnote{ (#3)}}}
\newtheoremstyle{ass}{10pt}{10pt}{}{}{\bfseries}{.}{10pt}{{\thmname{#1}\thmnumber{ A#2}\thmnote{ (#3)}}}
\theoremstyle{thmlemcorr}
\newtheorem{theorem}{Theorem}
\numberwithin{theorem}{section}
\newtheorem{corollary}[theorem]{Corollary}
\newtheorem{conjecture}[theorem]{Conjecture}
\theoremstyle{remexample}
\newcommand{\Crm}{\mathrm{C}}
\newcommand{\Fcal}{\mathcal{F}}
\newcommand{\Ocal}{\mathcal{O}}
\newcommand{\Xcal}{\mathcal{X}}
\newcommand{\Bfrak}{\mathfrak{B}}
\newcommand{\Mbb}{\mathbb{M}}
\newcommand{\Nbb}{\mathbb{N}}
\newcommand{\Tbb}{\mathbb{T}}
\newcommand{\T}{\mathbb{T}^d}
\newcommand{\Zbb}{\mathbb{Z}}
\newcommand{\vr}{\varrho}
\newcommand{\dx}{{\rm d} {x}}
\newcommand{\dt}{{\rm d} t }
\newcommand{\dxdt}{\dx \ \dt}
\newcommand{\norm}[1]{\|#1\|}
\newcommand{\R}{\mathbb{R}}
\newcommand{\eps}{\varepsilon}
\newcommand{\inttT}{\int_0^T\int_\Tbb}
\newcommand{\tplus}{\Tbb\times\R_+}
\newcommand{\timT}{\Tbb\times (0,T)}
\newcommand{\partt}{\partial_t}
\newcommand{\parx}{\partial_x}
\renewcommand{\phi}{\varphi}
\DeclareMathOperator{\diverg}{div}
\DeclareMathOperator{\dist}{dist}
\DeclareMathOperator{\supp}{supp}
\begin{document}
\title{A tribute to conservation of energy for weak solutions}
\author{Tomasz D\k{e}biec \and Piotr Gwiazda \and Agnieszka \'{S}wierczewska-Gwiazda.
}
\address{\textit{Tomasz D\k{e}biec:} Institute of Applied Mathematics and Mechanics, University of Warsaw, Banacha 2, 02-097 Warszawa, Poland}
\email{t.debiec@mimuw.edu.pl}
\address{\textit{Piotr Gwiazda:} Institute of Mathematics, Polish Academy of Sciences, \'Sniadeckich 8, 00-656 Warszawa, Poland, and Institute of Applied Mathematics and Mechanics, University of Warsaw, Banacha 2, 02-097 Warszawa, Poland}
\email{pgwiazda@mimuw.edu.pl}
\address{\textit{Agnieszka \'{S}wierczewska-Gwiazda:} Institute of Applied Mathematics and Mechanics, University of Warsaw, Banacha 2, 02-097 Warszawa, Poland}
\email{aswiercz@mimuw.edu.pl}

\maketitle

\begin{abstract}
In this article we focus our attention on the principle of energy conservation within the context of systems of fluid dynamics. We give an overview of results concerning the resolution of the famous Onsager conjecture - which states regularity requirements for weak solutions of the incompressible Euler system to conserve energy. Further we survey results providing optimal sufficient regularity conditions for energy conservation for other balance laws: compressible Euler, Navier-Stokes, magnetohydrodynamics and general conservation laws.
\end{abstract}

\section{Introduction}
The general global existence theory for systems of equations governing the motion of fluids is still an open problem, considered by many to be one of the most difficult problems in the field. It is however usually quite straightforward to assert that a classical solution, should it exist, will conserve the physical energy. However, basing on empirical observations, it was long expected that in a fully turbulent flow one might observe dissipation of energy. This was shown to be true by Scheffer \cite{scheffer} and Shnirelmann \cite{shnirel} - they constructed weak solutions for the incompressible Euler system which do not conserve kinetic energy. It is however a crucial feature of such solutions that they are irregular. Within the context of incompressible flows it was suggested by Lars Onsager \cite{On1949} in late forties that there exists a threshold regularity beyond which energy is conserved. The positive direction of his statement was resolved around the time that the examples of Scheffer and Shnirelmann were published. However the construction of highly regular dissipative solutions remained open until very recently. Furthermore, Onsager-type statements focused much interest in the context of other systems, in particular for compressible fluid dynamics.\\
In this paper we give an overview of results regarding energy conservation for a number of systems of fluid dynamics, including the classical Onsager statement and its refinements as well as the more recent developments.  
\subsection{Local vs. global conservation}
%Let us observe the local nature of the result obtained in the previous section.
When considering energy conservation for solutions of physical systems, one can mean one of the two things: either some local energy conservation principle or an energy identity of global nature. Let us consider the incompressible Euler system:
\begin{equation}\label{eulerintro}
\partt u + \diverg(u\otimes u) + \nabla p = 0,\;\;\; \diverg{u}=0.
\end{equation}
It is known, see discussion is Section \ref{secEuler}, that classical solutions to ~\eqref{eulerintro} must satisfy the following \emph{local} law of energy conservation
\begin{equation}\label{localenergyintro}
    \partt \frac{1}{2}|u|^2 + \diverg\left(u(\frac{1}{2}|u|^2+p)\right) = 0.
\end{equation}
From this identity one can deduce the \emph{global} identity
\begin{equation}\label{globalenergyintro}
    \int_{\R^d}\frac{1}{2}|u(x,t)|^2\ \dx = \int_{\R^d}\frac{1}{2}|u(x,s)|^2\ \dx
\end{equation}
for any $t,s\geq 0$. To see this, we integrate the local identity in time and space. However, sufficient decay of $u$ and $p$ has to be assumed at infinity. The last equality can be written as
\[\frac{\mathrm{d}}{\mathrm{d}t}\left(\int_{\R^d}\frac{1}{2}|u(x,t)|^2\ \dx\right) = 0,
\]
which will be referred to as the total energy balance.
The main issue of this article is to investigate whether \emph{weak} solutions of ~\eqref{eulerintro}, and other systems, satisfy any of the above energy conservation laws. Clearly one can not hope for the global energy balance to imply the local one. In this sense the local form of energy conservation is a stronger result. However, it is not clear that in general ~\eqref{localenergyintro} shall imply ~\eqref{globalenergyintro}, especially when a closed temporal domain is considered. In particular, some weak continuity in time might have to be assumed.
\subsection{Organization of the paper}
We begin the discussion with a simple case of a scalar conservation law, which serves as a toy model to display the useful technical machinery. Then in Sections \ref{secEuler} and \ref{secdissipative} we discuss the full resolution of Onsager's original conjecture for the incompressible Euler equations. In section \ref{secOnsagertype} we survey the available results on sufficient regularity of conservative weak solutions to other systems: compressible Euler, incompressible and compressible Navier-Stokes equations and ideal magnetohydrodynamics. Finally in Section \ref{secgeneral} we present a recent result of Gwiazda et al. \cite{GMSG} concerning general conservation laws.

\section{Basic commutator estimate}\label{secSimple}
In this section we wish to display the working of the main analytical tools that are usually employed in the context of showing conservation of energy for systems of fluid dynamics - namely smoothing of the balance equations and estimating commutator errors due to nonlinearities. This method was applied by Constantin et al. in \cite{ConstETiti} for the incompressible Euler, where the nonlinear term has a bilinear structure. Later an extension to the case of more general nonlinearities was presented by Feireisl et al. in \cite{FeGSGW}. This allowed treating the pressure term in the compressible Euler system. We will demonstrate here the estimates of the latter.
To this end we consider the very simple case of a single conservation law in one space dimension.
%We begin by considering the scalar conservation law
\begin{equation}\label{simpleConservlaw}
%\begin{aligned}
    \partial_t u + \parx f(u) = 0,
%\end{aligned}
\end{equation}
where $u:\timT\to\R$, $T>0$, is a conserved physical quantity and $f:\R\to\R$ is the flux of this quantity. We will assume that the flux function is twice continuously differentiable.\\
To this equation one can associate a pair $(\eta, q)$ of continously differentiable real-valued functions on $\R$ such that for each $u\in\R$
\begin{equation}\label{entropyfluxrelation}
\eta'(u) = f'(u)q'(u).
\end{equation}
If $u$ is a $\Crm^1(\timT)$ solution of ~\eqref{simpleConservlaw}, then multiplying ~\eqref{simpleConservlaw} by $\eta'(u)$ and invoking the chain rule yields
\begin{equation}\label{simpleConserventropy}
\partt\eta(u) + \parx q(u) = 0.
\end{equation}
Thus a classical solution of the initial conservation law satisfies also the additional conservation law ~\eqref{simpleConserventropy}, where the conserved quantity $\eta$ is called an \emph{entropy} for ~\eqref{simpleConservlaw} with \emph{entropy flux} $q$. 
However weak solutions of ~\eqref{simpleConservlaw} can be found, which do not satisfy ~\eqref{simpleConserventropy}. For example, if $f(u)=\frac{u^2}{2}$, then taking $\eta(u) = u^3$ and $q(u)=\frac{3}{4}u^4$, one can see a violation of ~\eqref{simpleConserventropy} by the steady weak solution 
 \[u(x,t) = 
                \begin{cases}
                1, &x<\frac{1}{2},    \\
                0, &x\geq\frac{1}{2}.
                \end{cases}
                \]
By a weak solution we mean a locally integrable function $u$, which satisfies ~\eqref{simpleConservlaw} in the sense of distributions on $\timT$.
In light of the non-uniqueness of such distributional solutions, an admissibility requirement can be posed for a solution to satisfy ~\eqref{simpleConserventropy} at least as an inequality for a convex entropy $\eta$. It is well-known that there exists a unique admissible weak solution to ~\eqref{simpleConservlaw}, cf. \cite{Kru}.\\
The goal of this section is to answer the following question: \emph{What is the minimal regularity requirement for a weak solution to ~\eqref{simpleConservlaw} so that it conserves entropy?} A similar question was asked by Lars Onsager for the incompressible Euler equations, which will be discussed in a later section. His conjecture was stated in terms of an appropriate H\"{o}lder continuity exponent. It was however observed by Constantin et al. in \cite{ConstETiti} that an appropriate setting in looking for sharp regularity conditions is provided by the Besov spaces. We define the space $B_p^{\alpha,\infty}(\T)$ to be
\begin{equation}
B_p^{\alpha,\infty}(\T)\coloneqq\left\{u\in L^p(\T)\;\Big |\;\norm{u}_{B_p^{\alpha,\infty}(\T)}<\infty \right\}
\end{equation}
where
\begin{equation}\label{besovnorm}
\norm{u}_{B_p^{\alpha,\infty}(\T)}\coloneqq\norm{u}_{L^p(\T)}+\sup\limits_{y\in\T}\frac{\norm{u(\cdot+y)-u(\cdot)}_{L^p(\T\cap(\T-y))}}{|y|^\alpha}.
\end{equation}
Notice that clearly $C^\alpha\subset B_p^{\alpha,\infty}$. Later we will describe a characterisation of Besov spaces using instruments of harmonic analysis.
We will now prove the following theorem
\begin{theorem}\label{thmsimple}
Let $u$ be a solution of ~\eqref{simpleConservlaw} in the sense of distributions. If $u$ belongs to $L^3(0,T;B_3^{\alpha,\infty}(\Tbb))$ for some $\alpha>\frac{1}{3}$, then it satisfies ~\eqref{simpleConserventropy} in the sense of distributions for any $\Crm^2$ entropy $\eta$ and corresponding entropy flux $q$.
\end{theorem}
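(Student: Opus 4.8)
The plan is to run the regularisation-and-commutator argument of Constantin, E and Titi \cite{ConstETiti} in the form adapted to nonlinear fluxes by Feireisl et al.\ \cite{FeGSGW}. First I would fix a standard mollifier $\rho^\eps$ on $\Tbb$ and set $u^\eps(\cdot,t)\coloneqq u(\cdot,t)*\rho^\eps$. Since spatial mollification commutes with $\partt$, the function $u^\eps$ is smooth in $x$, absolutely continuous in $t$ (because $\partt u^\eps=-\parx(f(u))^\eps$ is bounded in $x$ and integrable in $t$), and solves $\partt u^\eps+\parx(f(u))^\eps=0$ pointwise. Multiplying by $\eta'(u^\eps)$ gives $\partt\eta(u^\eps)+\eta'(u^\eps)\parx(f(u))^\eps=0$; decomposing $(f(u))^\eps=f(u^\eps)+r^\eps$ and using the chain rule together with \eqref{entropyfluxrelation} in the form $\eta'(u^\eps)\parx f(u^\eps)=\parx q(u^\eps)$, I would arrive at
\begin{equation*}
\partt\eta(u^\eps)+\parx q(u^\eps)=-\eta'(u^\eps)\,\parx r^\eps,\qquad r^\eps\coloneqq (f(u))^\eps-f(u^\eps).
\end{equation*}

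The core of the proof is to show that the right-hand side tends to $0$ in $\mathcal{D}'(\timT)$. I would first record the Taylor bound for the commutator: since $\int_\Tbb\rho^\eps(y)\,(u(x-y,t)-u^\eps(x,t))\,\mathrm{d}y=0$, a second-order expansion of $f$ about $u^\eps(x,t)$ yields $|r^\eps(x,t)|\lesssim\int_\Tbb\rho^\eps(y)\,|u(x-y,t)-u^\eps(x,t)|^2\,\mathrm{d}y$, the constant involving $\sup|f''|$ over the range of $u(\cdot,t)$, which is finite for a.e.\ $t$ because $B_3^{\alpha,\infty}(\Tbb)\hookrightarrow L^\infty(\Tbb)$ for $\alpha>\tfrac13$. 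Splitting the integrand through $|u(x-y)-u(x)|^2+|u(x)-u^\eps(x)|^2$, applying Minkowski in the $y$-integral and using \eqref{besovnorm} together with the mollifier bounds $\norm{u^\eps(\cdot,t)-u(\cdot,t)}_{L^3}\lesssim\eps^{\alpha}\norm{u(\cdot,t)}_{B_3^{\alpha,\infty}}$ and $\norm{\parx u^\eps(\cdot,t)}_{L^3}\lesssim\eps^{\alpha-1}\norm{u(\cdot,t)}_{B_3^{\alpha,\infty}}$, I would obtain $\norm{r^\eps(\cdot,t)}_{L^{3/2}(\Tbb)}\lesssim\eps^{2\alpha}\norm{u(\cdot,t)}_{B_3^{\alpha,\infty}}^2$.

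The decisive point — and the step I expect to be the main obstacle — is that one must not leave the derivative on $r^\eps$: estimating $\eta'(u^\eps)\parx r^\eps$ directly costs a factor $\eps^{-1}$ and would only close for $\alpha>\tfrac12$. Instead I would integrate by parts once,
\begin{equation*}
-\eta'(u^\eps)\,\parx r^\eps=-\parx\big(\eta'(u^\eps)r^\eps\big)+\eta''(u^\eps)\,(\parx u^\eps)\,r^\eps,
\end{equation*}
so that the $\eps^{\alpha-1}$ growth of $\parx u^\eps$ is paired with the $\eps^{2\alpha}$ smallness of the \emph{quadratic} commutator. Then $\norm{\eta'(u^\eps)r^\eps}_{L^1(\timT)}\lesssim\eps^{2\alpha}$, and by Hölder with exponents $3$ and $3/2$ in $x$ followed by Hölder in $t$,
\begin{equation*}
\norm{\eta''(u^\eps)\,(\parx u^\eps)\,r^\eps}_{L^1(\timT)}\lesssim\eps^{3\alpha-1}\,\norm{u}_{L^3(0,T;B_3^{\alpha,\infty})}^3,
\end{equation*}
using boundedness of $\eta',\eta''$ on the essential range of $u^\eps$. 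Since $\alpha>\tfrac13$ the exponent $3\alpha-1$ is positive, so both contributions vanish as $\eps\to0$; note that the exponent $p=3$ both in space (the Besov scale) and in time is exactly what makes this triple product integrable.

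Finally, since $\eta(u^\eps)\to\eta(u)$ and $q(u^\eps)\to q(u)$ in $L^1(\timT)$ (continuity of $\eta,q$, the $L^\infty$ bound above and dominated convergence), I would pass to the limit $\eps\to0$ in the identity of the first paragraph to conclude that $\partt\eta(u)+\parx q(u)=0$ in $\mathcal{D}'(\timT)$. The only technical points to dispatch carefully are the time-regularity justifying the chain rule for $u^\eps$ (immediate from the equation) and, if $f''$ is not globally bounded, keeping all constants finite via the embedding $B_3^{\alpha,\infty}(\Tbb)\hookrightarrow L^\infty(\Tbb)$; both are routine.
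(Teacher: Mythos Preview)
Your proposal is correct and follows essentially the same mollify--commutator--integrate-by-parts scheme as the paper: the paper writes the error as $f(u^\eps)-f^\eps(u)$, multiplies by $\eta'(u^\eps)\phi$, integrates by parts to move $\parx$ off the commutator, and bounds the commutator quadratically via Taylor's theorem, exactly matching your $I_1$-type estimate $\eps^{2\alpha}\cdot\eps^{\alpha-1}$. Two small differences worth noting: you expand $f$ about $u^\eps(x,t)$ and exploit $\int\rho^\eps(y)(u(x-y)-u^\eps(x))\,\mathrm{d}y=0$ to kill the linear term in one stroke, whereas the paper expands twice about $u(x,t)$ and subtracts; and your appeal to the embedding $B_3^{\alpha,\infty}(\Tbb)\hookrightarrow L^\infty(\Tbb)$ for $\alpha>1/3$ to localise $f''$, $\eta''$ is more careful than the paper, which simply writes $\norm{f}_{\Crm^2(\R)}$ and $\norm{\eta}_{\Crm^2(\R)}$ without comment.
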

%%%%%%%%%%%%
%%%%%%%%%%%%
\begin{proof}
Let $\theta\in\Crm_c^\infty(\Tbb)$ be a non-negative symmetric function with $\int_{\Tbb}\theta(x)\ \dx = 1$. We set $\theta^\eps = \frac{1}{\eps}\theta(\frac{x}{\eps})$. For a locally integrable function $u$ on $\timT$ we set 
\[u^\eps(x,t) = (u *_x \theta^\eps)(\cdot,t) = \int_{\Tbb}\theta^\eps(x')u(x-x',t)\ \mathrm{d}x'.
\]
This function is clearly well-defined on the set $\left\{(x,t)\in\timT\;|\;\dist\left((x,t),\partial(\timT)\right)>\eps\right\}$.\\
From the definition ~\eqref{besovnorm} one can readily deduce the following estimates
%\begin{aligned}
%\begin{equation}\label{besovshift}
%\norm{u(\cdot+y,t)-u(\cdot,t)}_{L^3(\Tbb)}&\leq C\;|y|^\alpha\norm{u(\cdot,t)}_{B_p^{\alpha,\infty}(\Tbb)}
%\end{equation}
%\begin{equation}\label{besovapprox}
%\norm{u^\eps(\cdot,t)-u(\cdot,t)}_{L^3(\Tbb)}&\leq C\eps^\alpha\norm{u(\cdot,t)}_{B_p^{\alpha,\infty}(\Tbb)}
%\end{equation}
%\begin{equation}\label{besovgrad}
%\norm{\nabla u^\eps(\cdot,t)}_{L^3(\Tbb)}&\leq %C\eps^{\alpha-1}\norm{u(\cdot,t)}_{B_p^{\alpha,\infty}(\Tbb)}
%\end{equation}
%.
%\begin{equation}
\begin{align}
&\norm{u(\cdot+y,t)-u(\cdot,t)}_{L^3(\Tbb)}\leq C\;|y|^\alpha\norm{u(\cdot,t)}_{B_p^{\alpha,\infty}(\Tbb)}\label{besovshift}\\ 
&\norm{u^\eps(\cdot,t)-u(\cdot,t)}_{L^3(\Tbb)}\leq C\eps^\alpha\norm{u(\cdot,t)}_{B_p^{\alpha,\infty}(\Tbb)} \label{besovapprox}\\
&\norm{\nabla u^\eps(\cdot,t)}_{L^3(\Tbb)}\leq C\eps^{\alpha-1}\norm{u(\cdot,t)}_{B_p^{\alpha,\infty}(\Tbb)} \label{besovgrad}
\end{align}
for any $t\in(0,T)$.\\
%\end{equation}
Mollifying equation ~\eqref{simpleConservlaw} in space yields
\begin{equation}\label{smoothsimpleconservlaw}
\partt u^\eps + \parx f^\eps(u) = 0,
\end{equation}
where we have used the fact that mollifiers commute with derivatives and we denote
\[f^\eps(u(x,t)) = \int_{\Tbb}f(u(x-x',t))\theta^\eps(x')\ \mathrm{d}x'.
\]
Equation ~\eqref{smoothsimpleconservlaw} can be written as 
\begin{equation}\label{simplesmoothcommutator}
\partt u^\eps + \parx f(u^\eps) = \parx\left(f(u^\eps) - f^\eps(u)\right).
\end{equation}
Now let $(\eta,q)$ be an entropy-entropy flux pair as described above with $\eta\in\Crm^2(\R)$. Further let $\phi\in\Crm_c^\infty(\timT)$ and choose $\eps>0$ small enough so that $\supp\phi\subset\Tbb\times(\eps,T-\eps)$. We multiply the last equation by $\eta'(u^\eps)\phi$ and integrate over time and space to get
\begin{equation}
\begin{aligned}
\inttT\partt u^\eps\eta'(u^\eps)\phi\ \dxdt + \inttT&\parx (f(u^\eps))\eta'(u^\eps)\phi\ \dxdt\\
&= \inttT\parx\left(f(u^\eps) - f^\eps(u)\right)\eta'(u^\eps)\phi\ \dxdt.
\end{aligned}
\end{equation}
Due to ~\eqref{entropyfluxrelation} the left-hand side of the last equation can be easily seen to be equal to
\[\inttT\left[\partt\eta(u^\eps)+\parx q(u^\eps)\right]\phi\ \dxdt.
\]
We will now show that, under our regularity conditions on $u$, the integral
\[R_\eps\coloneqq\inttT\parx\left(f(u^\eps) - f^\eps(u)\right)\eta'(u^\eps)\phi\ \dxdt
\]
converges to zero as $\eps\to 0^+$. To this end we observe that by Taylor's theorem we have
\begin{equation}\label{Taylor1}
\left|f(u^\eps(x,t))-f(u(x,t))-f'(u(x,t))(u^\eps(x,t)-u(x,t))\right|\leq C(u^\eps(x,t)-u(x,t))^2
\end{equation}
where, importantly, the constant $C$ does not depend on the choice of $x$ and $t$ (it depends only on the norm $\norm{f}_{\Crm^2(\R)}$). Similarly
\begin{equation}\label{Taylor2}
\left|f(u(y,t))-f(u(x,t))-f'(u(x,t))(u(y,t)-u(x,t))\right|\leq C(u(y,t)-u(x,t))^2.
\end{equation}
Mollification of the last inequality with respect to $y$ yields, by virtue of Jensen's inequality
\begin{equation}\label{Taylor3}
\left|f^\eps(u(x,t))-f(u(x,t))-f'(u(x,t))(u^\eps(x,t)-u(x,t))\right|\leq C(u(\cdot,t)-u(x,t))^2*_y\theta^\eps.
\end{equation}
Combining ~\eqref{Taylor1} and ~\eqref{Taylor3} and using the triangle inequality we deduce the estimate
\begin{equation}\label{nonlinearestimate}
\left|f(u^\eps(x,t))-f^\eps(u(x,t))\right|\leq C\left[(u^\eps(x,t)-u(x,t))^2 + (u(\cdot,t)-u(x,t))^2*_y\theta^\eps\right].
\end{equation}
Now we consider again the error term $R_\eps$. Integrating by parts we obtain
\begin{equation*}
    R_\eps = -\inttT\left(f(u^\eps)-f^\eps(u)\right)\left(\eta''(u^\eps)\parx u^\eps\phi+\eta'(u^\eps)\parx\phi\right)\ \dxdt.
\end{equation*}
The inner integral can be estimated by
\begin{equation*}
\begin{aligned}
\int_\Tbb&|f(u^\eps)-f^\eps(u)||\eta''(u^\eps)\parx u^\eps\phi|\ \dx + \int_\Tbb|f(u^\eps)-f^\eps(u)||\eta'(u^\eps)\parx\phi|\ \dx\\
&\leq \norm{\eta}_{\Crm^2(\R)}\norm{\phi}_{\Crm(\timT)}\underbrace{\int_\Tbb|f(u^\eps)-f^\eps(u)||\parx u^\eps|\ \dx}_{\text{$I_1$}}\\
&\hspace{0.5cm}+ \norm{\eta}_{\Crm^1(\R)}\norm{\phi}_{\Crm^1(\timT)}\underbrace{\int_\Tbb|f(u^\eps)-f^\eps(u)|\ \dx}_{\text{$I_2$}}.
\end{aligned}
\end{equation*}
Using the estimate ~\eqref{nonlinearestimate} we can calculate
\begin{equation*}
\begin{aligned}
I_2 &\leq C\left[\int_\Tbb|u^\eps(x,t)-u(x,t)|^2\ \dxdt + \int_\Tbb|(u(\cdot,t)-u(x,t))^2 *_y\theta^\eps|\ \dx\right] \\
&\leq C\left(\int_\Tbb|u^\eps(x,t)-u(x,t)|^3\ \dx\right)^{\frac{2}{3}} + C\int_\Tbb\;\sup\limits_{|y|<\eps}{|u(x-y,t)-u(x,t)|^2}\ \dx \\
&\leq C\left(\int_\Tbb|u^\eps(x,t)-u(x,t)|^3\ \dx\right)^{\frac{2}{3}} + C\sup\limits_{|y|<\eps}\left(\int_\Tbb|u(x-y,t)-u(x,t)|^3\ \dx\right)^{\frac{2}{3}} \\
&\leq C\left(\norm{u^\eps(\cdot,t)-u(\cdot,t)}^2_{L^3(\Tbb)} + \sup\limits_{|y|<\eps}\norm{u^\eps(\cdot-y,t)-u(\cdot,t)}^2_{L^3(\Tbb)}\right)\\
&\leq C\eps^{2\alpha}\norm{u(\cdot,t)}^2_{B_3^{\alpha,\infty}(\Tbb)}.
\end{aligned}
\end{equation*}
%Since no of the constants in the above string of inequalities depends on $\eps$, we see that $I_2\to 0$ as $\eps\to 0^+$.
Similar estimation can be carried out for $I_1$. For brevity we will ignore the term of ~\eqref{nonlinearestimate} involving the convolution. As seen above this term produces estimates of the same type as the other term. We therefore have
\begin{equation*}
\begin{aligned}
I_1 &\leq\int_\Tbb|u^\eps(x,t)-u(x,t)|^2|\parx u^\eps(x,t)|\ \dx \leq \norm{u^\eps(\cdot,t)-u(\cdot,t)}^2_{L^3(\Tbb)}\norm{\parx u^\eps(\cdot,t)}_{L^3(\Tbb)}\\
&\leq C\eps^{2\alpha}\eps^{\alpha-1}\norm{u(\cdot,t)}^3_{B_3^{\alpha,\infty}(\Tbb)}.
\end{aligned}
\end{equation*}
Both of the above estimates hold for any time $t\in(0,T)$ and the constants do not depend on $\eps$. Integrating w.r.t. time we therefore see that $R_\eps\to 0$ as $\eps\to 0^+$ provided $3\alpha-1>0$, i.e. $\alpha>\frac{1}{3}$.
%\begin{equation}\label{finalestimate}
%    |R_\eps|\leq C(\eps^{2\alpha}+\eps^{3\alpha+1})\norm{u(\cdot,t)}_{L^B_3^{\alpha,\infty}(\Tbb)}
%\end{equation}
%The right-hand side of the last inequality converges to zero as $\eps\to 0^+$ provided $3\alpha-1>0$, i.e. $\alpha>\frac{1}{3}$.\\
We have shown that the right-hand side of ~\eqref{simplesmoothcommutator} vanishes as $\eps\to 0^+$. It follows from standard approximating properties of mollifiers that passage to the limit in the left-hand side of the same equality is possible. We therefore obtain
\begin{equation}
\inttT\left[\partt\eta(u)+\parx q(u)\right]\phi\ \dxdt = 0,
\end{equation}
which holds for any $\phi\in\Crm_c^\infty(\timT)$. 
\end{proof}

\section{Incompressible Euler}\label{secEuler}
In this section we focus our attention on the incompressible Euler system
\begin{equation}\label{incompressEuler}
\begin{aligned}
\partt u + \diverg(u\otimes u) + \nabla p &= 0,\\
\diverg{u} &= 0,
\end{aligned}
\end{equation}
where the space domain is either the periodic domain $\T$ or the whole space $\R^d$ and the temporal domain is the interval $[0,T]$.
These equations were the subject of the original Onsager conjecture from 1949, cf. \cite{On1949}. If $u$ is a classical solution of ~\eqref{incompressEuler}, then multiplying the balance equation by $u$ we obtain
\[\frac{1}{2}\partt |u|^2 + \frac{1}{2}u\cdot\nabla|u|^2 + u\cdot\nabla p = 0.
\]
Integrating the last equality over the space domain $\Omega$, integrating the last two terms by parts and using the incompressibility condition yields
\[
\frac{\mathrm{d}}{\mathrm{d}t}\int_\Omega\frac{1}{2}|u(x,t)|^2\ \dx = 0.  
\]
Consequently, integrating over time in $(0,t)$, gives
\begin{equation}\label{energyconservincompressEuler}
\int_{\Omega}\frac{1}{2}|u(x,t)|^2\ \dx = \int_\Omega\frac{1}{2}|u(x,0)|\ \dx
\end{equation}
for any $t\in[0,T]$. Thus the principle of conservation of energy holds for classical solutions of ~\eqref{incompressEuler}. Similarly as in Section \ref{secSimple} however, if $u$ is a weak solution, then ~\eqref{energyconservincompressEuler} might not hold. Technically, the problem is that $u$ might not be regular enough to justify integration by parts in the above derivation.\\ 
Motivated by the laws of turbulence Onsager postulated that there is a critical regularity for a weak solution to be a conservative one. Rigorously he stated the following conjecture
\begin{conjecture}\label{onsagerconjecture}
Let $u$ be a weak solution of ~\eqref{incompressEuler}.
\begin{itemize}
    \item If $u\in\Crm^\alpha$ with $\alpha>\frac{1}{3}$, then the energy is conserved.
    \item For any $\alpha<\frac{1}{3}$ there exists a weak solution $u\in C^{\alpha}$ which does not conserve the energy.
\end{itemize} 
\end{conjecture}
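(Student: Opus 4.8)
The plan is to prove the two bullets by entirely different means: the conservation bullet by the smoothing-and-commutator argument already displayed in the proof of Theorem~\ref{thmsimple}, now carried out for the quadratic nonlinearity and the pressure of~\eqref{incompressEuler}; the dissipation bullet by convex integration, which is where essentially all the difficulty lies.

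\emph{Positive direction.} On the torus (on $\R^d$ one additionally assumes the usual decay of $u,p$ at infinity) I would mollify~\eqref{incompressEuler} in space to get
\begin{equation*}
\partt u^\eps + \diverg\bigl(u^\eps\otimes u^\eps\bigr) + \nabla p^\eps = \diverg r_\eps,\qquad \diverg u^\eps = 0,\qquad r_\eps\coloneqq u^\eps\otimes u^\eps-(u\otimes u)^\eps,
\end{equation*}
test against $u^\eps\phi$ with $\phi\in\Crm_c^\infty$, and use $\diverg u^\eps=0$ to integrate the convective and pressure terms by parts; then every term except the commutator contribution either carries a derivative on $\phi$ or has the form $\int p^\eps u^\eps\cdot\nabla\phi$, and all of these pass to the limit by standard mollifier convergence (here $u\in L^3$ and, by Calder\'on--Zygmund, $p\in L^{3/2}$). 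Everything thus reduces to showing $\int r_\eps:\nabla u^\eps\,\phi\to 0$. Using the elementary identity
\begin{equation*}
r_\eps(x,t)=\bigl(u^\eps(x,t)-u(x,t)\bigr)\otimes\bigl(u^\eps(x,t)-u(x,t)\bigr)-\int_\Tbb\theta^\eps(y)\,\delta_y u(x,t)\otimes\delta_y u(x,t)\,{\rm d}y,
\end{equation*}
with $\delta_y u(x,t)=u(x-y,t)-u(x,t)$, together with the Besov-type bounds analogous to~\eqref{besovapprox}--\eqref{besovgrad} with $p=3$ (legitimate since $\Crm^\alpha\subset B_3^{\alpha,\infty}$), one gets $\norm{r_\eps(\cdot,t)}_{L^{3/2}(\Tbb)}\lesssim\eps^{2\alpha}\norm{u(\cdot,t)}_{B_3^{\alpha,\infty}}^2$ and $\norm{\nabla u^\eps(\cdot,t)}_{L^3(\Tbb)}\lesssim\eps^{\alpha-1}\norm{u(\cdot,t)}_{B_3^{\alpha,\infty}}$, so by H\"older the commutator term is $O(\eps^{3\alpha-1})$ and vanishes precisely when $\alpha>\tfrac13$. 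Letting $\eps\to0^+$ yields the local energy equality~\eqref{localenergyintro}, and integrating in space (using weak-in-time continuity of $u$ in $L^2$, which follows from the equation since $u\otimes u\in L^{3/2}$) gives the global balance~\eqref{globalenergyintro}; the same computation in fact works for $u\in L^3(0,T;B_3^{\alpha,\infty})$, which is the sharp Besov version of the statement.

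\emph{Negative direction.} Here no soft argument exists and one must construct the solutions. I would work with the Euler--Reynolds system
\begin{equation*}
\partt v + \diverg(v\otimes v) + \nabla p = \diverg\mathring{R},\qquad \diverg v = 0,
\end{equation*}
with $\mathring{R}$ symmetric and trace-free, and build inductively smooth triples $(v_q,p_q,\mathring{R}_q)$ with frequencies $\lambda_q\sim a^{b^q}$ and amplitudes $\delta_q\sim\lambda_q^{-2\beta}$ such that $\norm{\mathring{R}_q}_{C^0}\to0$, $\norm{v_{q+1}-v_q}_{C^0}\lesssim\delta_{q+1}^{1/2}$, $\norm{v_{q+1}-v_q}_{C^1}\lesssim\delta_{q+1}^{1/2}\lambda_{q+1}$, and $\tfrac12\int_\Tbb|v_q(x,t)|^2\,\dx$ converges to a prescribed profile $e(t)$. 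The increment $v_{q+1}-v_q$ is a fast oscillation whose quadratic self-interaction is engineered, via a geometric decomposition of symmetric matrices into finitely many terms $\xi\otimes\xi$, to cancel $\mathring{R}_q$ up to a strictly smaller error $\mathring{R}_{q+1}$ assembled from the transport error, the oscillation (Nash) error, and the high--low interaction error. Summing the telescoping series, $v\coloneqq\lim_q v_q$ solves~\eqref{incompressEuler} in $\bigcap_{\alpha<\beta}\Crm^\alpha$ with energy $e$; choosing $e$ non-constant and $\beta$ as close to $\tfrac13$ as the scheme allows produces, for every $\alpha<\tfrac13$, a non-conservative $u\in\Crm^\alpha$.

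\emph{Main obstacle.} The whole difficulty is pushing $\beta$ up to the Onsager threshold $\tfrac13$. With Beltrami-flow building blocks the transport error limits one to $\beta<\tfrac1{10}$ (then $\tfrac15$), and reaching $\tfrac13$ requires two further devices: a gluing step replacing $v_q$ by an exact Euler solution on a partition of the time interval, which concentrates $\mathring{R}_q$ near the gluing times and all but removes the transport error; and Mikado flows — stationary pipe-like solutions supported on disjoint periodic tubes — whose self-interactions vanish identically, so the oscillation error is controlled by stationary-phase estimates. Making all error terms summable in the relevant H\"older norms while choosing $a,b,\beta$ consistently is the technical heart of the argument; this is precisely Isett's resolution, subsequently refined by Buckmaster--De~Lellis--Sz\'ekelyhidi--Vicol to allow in addition any prescribed (in particular strictly decreasing) energy profile.
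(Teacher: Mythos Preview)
Your proposal is correct and aligns with the paper's treatment. The positive direction is exactly the Constantin--E--Titi commutator argument the paper reproduces (the paper tests against $u^\eps$ alone rather than $u^\eps\phi$, going straight to the global identity, but the commutator identity~\eqref{commutatoridentity} and the $\eps^{3\alpha-1}$ estimate are identical to yours); for the negative direction the paper, being a survey, gives no proof and simply narrates the convex-integration programme of De~Lellis--Sz\'ekelyhidi through \cite{isett} and \cite{BucDeLSzV}, which is precisely the scheme your sketch outlines --- Euler--Reynolds iteration, the $\tfrac{1}{10}$ and $\tfrac{1}{5}$ barriers, and the gluing plus Mikado-flow innovations needed to reach $\tfrac{1}{3}$.
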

The first result concerning the first part of the conjecture was due to Eyink in \cite{eyink}. This was however not a proof of Onsager's conjecture, as it required stronger regularity. Concretely Eyink required that for a.e. time the velocity field $u$ belongs to the space $\Crm_*^\alpha(\T)$, $\alpha>\frac{1}{3}$, of functions, whose Fourier coefficients satisfy the following summability condition
\[\sum\limits_{k\in\Zbb^d}|k|^\alpha|\hat{u}(k)| < \infty.
\]
This condition can be easily seen to imply H\"{o}lder continuity of order $\alpha$, i.e. $\Crm_*^\alpha\subset\Crm^\alpha$. However there is no equality. For example the function 
\[u(x) = \sum\limits_{n=1}^\infty\frac{\cos{(3^nx)}}{3^{\alpha n}}
\]
is of class $\Crm^\alpha(\Tbb)$ for any $0<\alpha\leq 1$, but not $\Crm_*^\alpha(\Tbb)$.\\
The first full proof of the sufficient condition predicted by Onsager was published in 1994 by Constantin, E and Titi, cf. \cite{ConstETiti}, who actually strengthened the original statement by assuming spatial regularity of class $B_3^{\alpha,\infty}(\T)$ with $\alpha>\frac{1}{3}$. As observed before the Besov space $B_3^{\alpha,\infty}(\T)$ contains $C^\alpha(\T)$ as a proper subspace. We recall here the full proof of the result in \cite{ConstETiti} for reader's convenience.
\begin{theorem}
Let $u\in L^3([0,T],B_3^{\alpha,\infty}(\T))\cap\Crm([0,T],L^2(\T))$ be a weak solution of the incompressible Euler system. If $\alpha>\frac{1}{3}$, then
\[\int_{\T}\frac{1}{2}|u(x,t)|^2\ \dx = \int_{\T}\frac{1}{2}|u(x,0)|^2\ \dx
\]
for each $t\in[0,T]$.
\end{theorem}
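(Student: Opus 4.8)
\section*{Proof proposal}

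The plan is to recycle the mollification scheme of Section~\ref{secSimple}, the new ingredients being that the divergence-free constraint is used to eliminate \emph{simultaneously} the pressure and the cubic term, so that a single commutator survives, and that the hypothesis $u\in\Crm([0,T];L^2(\T))$ is used at the very end to promote an almost-everywhere identity to one holding at every time of the closed interval. First I would mollify \eqref{incompressEuler} in the space variable (with $\theta^\eps$ a standard radial mollifier on $\T$, exactly as in the proof of Theorem~\ref{thmsimple}), obtaining
\begin{equation*}
\partt u^\eps + \diverg(u^\eps\otimes u^\eps) + \nabla p^\eps = \diverg r^\eps,\qquad \diverg u^\eps = 0,\qquad r^\eps\coloneqq u^\eps\otimes u^\eps - (u\otimes u)^\eps .
\end{equation*}
Taking the scalar product with $u^\eps$ and using $\diverg u^\eps=0$ to rewrite $u^\eps\cdot\diverg(u^\eps\otimes u^\eps)=\tfrac12\diverg(|u^\eps|^2u^\eps)$ and $u^\eps\cdot\nabla p^\eps=\diverg(p^\eps u^\eps)$, together with $u^\eps\cdot\diverg r^\eps=\diverg(r^\eps u^\eps)-r^\eps:\nabla u^\eps$, one obtains the regularised local energy balance
\begin{equation*}
\tfrac12\partt|u^\eps|^2 + \diverg\!\left(\tfrac12|u^\eps|^2u^\eps + p^\eps u^\eps - r^\eps u^\eps\right) = -\,r^\eps:\nabla u^\eps .
\end{equation*}

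Next I would multiply by a temporal cut-off $\phi\in\Crm_c^\infty(0,T)$ and integrate over $\T\times(0,T)$; since the domain is periodic all the terms in divergence form integrate to zero, and integrating by parts in time (legitimate, since mollification makes $u^\eps$ smooth in space and the equation then supplies enough time regularity) gives
\begin{equation*}
\int_0^T\phi'(t)\!\left(\int_\T\tfrac12|u^\eps(x,t)|^2\,\dx\right)\dt = \int_0^T\phi(t)\!\left(\int_\T r^\eps:\nabla u^\eps\,\dx\right)\dt .
\end{equation*}
The heart of the proof is that the right-hand side vanishes as $\eps\to0^+$. For this I would invoke the Constantin--E--Titi commutator identity, an exact algebraic rearrangement of the definition of mollification,
\begin{equation*}
r^\eps(\cdot,t) = (u^\eps-u)\otimes(u^\eps-u) - \int_\T\theta^\eps(y)\,\big(u(\cdot-y,t)-u(\cdot,t)\big)\otimes\big(u(\cdot-y,t)-u(\cdot,t)\big)\,\mathrm{d}y .
\end{equation*}
Estimating the first term by \eqref{besovapprox} and the second, via Jensen's inequality, by \eqref{besovshift} (both applied componentwise with $p=3$) yields $\norm{r^\eps(\cdot,t)}_{L^{3/2}(\T)}\leq C\eps^{2\alpha}\norm{u(\cdot,t)}_{B_3^{\alpha,\infty}(\T)}^2$. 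Combining this with \eqref{besovgrad}, namely $\norm{\nabla u^\eps(\cdot,t)}_{L^3(\T)}\leq C\eps^{\alpha-1}\norm{u(\cdot,t)}_{B_3^{\alpha,\infty}(\T)}$, and applying Hölder's inequality with exponents $\tfrac32$ and $3$ gives
\begin{equation*}
\left|\int_\T r^\eps:\nabla u^\eps\,\dx\right| \leq C\eps^{3\alpha-1}\norm{u(\cdot,t)}_{B_3^{\alpha,\infty}(\T)}^3
\end{equation*}
for a.e. $t\in(0,T)$, with $C$ independent of $\eps$. Integrating in time and using $u\in L^3(0,T;B_3^{\alpha,\infty}(\T))$ together with $3\alpha-1>0$, the right-hand side tends to $0$.

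To pass to the limit on the left-hand side, note that $u\in\Crm([0,T];L^2(\T))\subset L^2(\T\times(0,T))$ and that spatial mollification converges in $L^2$, so $u^\eps\to u$ in $L^2(\T\times(0,T))$ and hence $t\mapsto\int_\T\tfrac12|u^\eps(x,t)|^2\,\dx$ converges in $L^1(0,T)$ to $t\mapsto\int_\T\tfrac12|u(x,t)|^2\,\dx$. Therefore $\int_0^T\phi'(t)\!\left(\int_\T\tfrac12|u(x,t)|^2\,\dx\right)\dt=0$ for every $\phi\in\Crm_c^\infty(0,T)$, which means the function $e(t)\coloneqq\int_\T\tfrac12|u(x,t)|^2\,\dx$ has vanishing distributional derivative and is thus a.e.\ equal to a constant on $(0,T)$. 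Since $u\in\Crm([0,T];L^2(\T))$, the map $e$ is continuous on $[0,T]$, so it is in fact constant on the whole interval; in particular $e(t)=e(0)=\int_\T\tfrac12|u(x,0)|^2\,\dx$ for every $t\in[0,T]$, which is the assertion.

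I expect the main obstacle to be the commutator estimate $\norm{r^\eps(\cdot,t)}_{L^{3/2}(\T)}\lesssim\eps^{2\alpha}\norm{u(\cdot,t)}_{B_3^{\alpha,\infty}(\T)}^2$: it is precisely the quadratic structure of the nonlinearity $u\otimes u$ — exploited through the identity above, which has no analogue for a merely continuous nonlinearity — that produces the double gain $\eps^{2\alpha}$, and balancing it against the loss $\eps^{\alpha-1}$ coming from one spatial derivative is exactly what forces the Onsager threshold $\alpha=\tfrac13$. A secondary subtlety worth flagging is that the continuity-in-time hypothesis plays no role in making the commutator vanish, but is indispensable for upgrading the almost-everywhere energy equality to one valid at every time of the closed interval $[0,T]$, the endpoint $t=0$ included.
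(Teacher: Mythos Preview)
Your proposal is correct and follows essentially the same approach as the paper: spatial mollification, the Constantin--E--Titi commutator identity, and the Besov estimates \eqref{besovshift}--\eqref{besovgrad} to obtain the $\eps^{3\alpha-1}$ bound. The only notable difference is in the time variable: the paper integrates directly over $[0,\tau)$ (remarking that time mollification ``poses no technical difficulty'' and suppressing it), whereas you test against $\phi\in\Crm_c^\infty(0,T)$, deduce that the distributional derivative of $e(t)$ vanishes, and then invoke $u\in\Crm([0,T];L^2(\T))$ to upgrade to every $t\in[0,T]$; this is a minor and equally valid variation.
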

\begin{proof}
Following \cite{ConstETiti} we observe that mollification of the weak solution in time poses no technical difficulty, so for brevity of notation we ignore it in the proof. We use notation as in the proof of Theorem \ref{thmsimple}. After mollifying equation ~\eqref{incompressEuler} in space we obtain
\begin{equation}
\partt u^\eps + \diverg(u^\eps\otimes u^\eps) + \nabla p^\eps = \diverg\left(u^\eps\otimes u^\eps - (u\otimes u)^\eps\right).
\end{equation}
Multiplying this equation by $u^\eps$ and integrating over the space domain $\T$ we get for any time $t$
\[
\int_{\T}\partt\left(\frac12 |u^\eps(x,t)|^2\right)\ \dx = \int_{\T}\left[(u\otimes u)^\eps-u^\eps\otimes u^\eps\right]:\nabla u^\eps\ \dx.
\]
Integrating in turn with respect to time on $[0,\tau)$ we obtain
\begin{equation}\label{smoothedenergy}
\int_{\T}\frac12|u^\eps(x,\tau)|^2\ \dx - \int_{\T}\frac12|u^\eps(x,0)|^2\ \dx = \int_0^\tau\int_{\T}\left[(u\otimes u)^\eps-u^\eps\otimes u^\eps\right]:\nabla u^\eps\ \dxdt.
\end{equation}
Clearly  we can pass with $\eps$ to zero on the left-hand side of the last equality to obtain
\[\int_{\T}\frac12|u(x,\tau)|^2\ \dx - \int_{\T}\frac12|u(x,0)|^2\ \dx.
\]
Therefore to conclude the proof it is enough to show that the right-hand side of ~\eqref{smoothedenergy} converges to zero as $\eps\to 0^+$. To this end we observe the following identity
\begin{equation}\label{commutatoridentity}
(u\otimes u)^\eps(x,t)-(u^\eps\otimes u^\eps)(x,t) = r_\eps(x,t) - \left[(u-u^\eps)\otimes(u-u^\eps) \right](x,t),
\end{equation}
which holds for any $(x,t)\in\T\times[0,T]$, where
\[r_\eps(x,t)\coloneqq\int_{\T}\theta^\eps(x')\left[(u(x-x',t)-u(x,t))\otimes(u(x-x',t)-u(x,t))\right]\ \mathrm{d}x'.
\]
Substituting ~\eqref{commutatoridentity} into ~\eqref{smoothedenergy} we obtain two terms to be estimated
\begin{equation*}
    \begin{aligned}
  \Big|\int_{\T}&[(u\otimes u)^\eps-u^\eps\otimes u^\eps]:\nabla u^\eps\ \dx \Big| \leq \int_{\T}|u^\eps-u|^2|\nabla u^\eps|\ \dx + \int_{\T}|r_\eps||\nabla u^\eps|\ \dx\\
  &\leq\norm{u^\eps(\cdot,t)-u(\cdot,t)}^2_{L^3(\T)}\norm{\nabla u^\eps(\cdot,t)}_{L^3(\T)} + \norm{r_\eps(\cdot,t)}_{L^{\frac32}}\norm{\nabla u^\eps(\cdot,t)}_{L^3(\T)}
    \end{aligned}.
\end{equation*}
We make note of the following estimate
\begin{equation*}
    %\begin{aligned}
    |r_\eps(x,t)|^{\frac32}\leq |\theta^\eps(x')|^{\frac32}|u(x-x',t)-u(x,t)|^3\ \mathrm{d}x'\leq C\sup\limits_{|x'|\leq\eps}|u(x-x',t)-u(x,t)|^3.
    %\end{aligned}.
\end{equation*}
It follows, using ~\eqref{besovshift}, that
\begin{equation*}
    \norm{r_\eps(\cdot,t)}_{L^{\frac32}}\leq C\eps^{2\alpha}\norm{u}_{B_3^{\alpha,\infty}(\T)}^2.
\end{equation*}
Consequently, using ~\eqref{besovapprox} and ~\eqref{besovgrad}, we have the estimate
\begin{equation*}
\Big|\int_{\T}[(u\otimes u)^\eps-u^\eps\otimes u^\eps]:\nabla u^\eps\ \dx \Big| \leq C\eps^{2\alpha}\eps^{\alpha-1}\norm{u(\cdot,t)}_{B_3^{\alpha,\infty}}.
\end{equation*}
Integrating the last inequality in time we obtain
\begin{equation*}
\Big|\int_0^\tau\int_{\T}[(u\otimes u)^\eps-u^\eps\otimes u^\eps]:\nabla u^\eps\ \dxdt \Big| \leq C\eps^{2\alpha}\eps^{\alpha-1}\norm{u(\cdot,t)}_{L^3(0,T;B_3^{\alpha,\infty})}.
\end{equation*}
\end{proof}
There have since been several results which refine the above theorem. Duchon and Robert have shown in \cite{DuRo} that energy conservation ~\eqref{energyconservincompressEuler} holds true provided the velocity field satisfies the following integral condition
\begin{equation}
\int_{\T}|u(x+y,t)-u(x,t)|^3\ \dx \leq C(t)|y|\sigma(|y|),\;\forall y\in\T
\end{equation}
where $C$ is an integrable function on $[0,T]$ and $\sigma(a)\to 0$ as $a\to 0$.
In fact they have shown that under the above assumption a local energy equality
\begin{equation}
\partt\left(\frac{1}{2}|u|^2\right) + \diverg\left(u(\frac{1}{2}|u|^2+p)\right) = 0
\end{equation}
holds in the sense of distributions. A further refinement was provided by Cheskidov et al. in ~ \cite{cheskidov}, where the space $B_3^{\frac{1}{3},c(\Nbb)}(\R^d)$ is introduced via Littlewood-Paley decomposition. Since this framework proved to be very potent and is commonly used in this context, we introduce here the Littlewood-Paley decomposition and give a definition of a Besov norm equivalent to the one used in Section \ref{secSimple}.\\
Let $\psi\in\Crm_0^\infty(B(0,1))$ be a non-negative radial function, defined on the open unit ball in $\R^d$, such that $\psi(r) = 1$, for $r\leq\frac12$. Define $\zeta(\xi) = \psi(\frac{\xi}{2})-\psi(\xi)$. Further, denoting by $\mathcal{F}$ the Fourier transform on $\R^d$, we define the following operators
\begin{align*}
h &= {\Fcal}^{-1}\zeta\quad {\rm and}\quad \tilde{h} ={\Fcal}^{-1}\psi, \\
\Delta_q u & = \Fcal^{-1}(\zeta(2^{-q}\xi)\Fcal u) = 2^{qd}\int h(2^q y)u(x-y)\ \mathrm{d}y, \;\; q\ge 0,\\
\Delta_{-1}u & =\Fcal^{-1}(\psi(\xi)\Fcal u) = \int \tilde{h}(y)u(x-y)\ \mathrm{d}y.
\end{align*}
For $N\in\Nbb$ we define the operator $S_N$ by
\[S_N = \sum\limits_{q=-1}^{N}\Delta_q.
\]
Then the inhomogeneous Besov space $B_p^{s,r}(\R^d)$ is defined to be the space of all tempered distributions $u$ for which the following norm is finite
\begin{equation}\label{besovnorm2}
\norm{u}_{B_p^{s,r}}\coloneqq \norm{\Delta_{-1}u}_{L^p} + \norm{\{2^{qs}\norm{\Delta_q u}_{L^p}\}_{q\in\Nbb}}_{l^p(\Nbb)}.
\end{equation}
Furthermore one can define the space $B_3^{\frac{1}{3},c(\Nbb)}(\R^d)$ of tempered distributions for which
\[\lim\limits_{q\to\infty}\left(2^{\frac{q}{3}}\norm{\Delta_q u}_{L^3}\right) = 0
\]
together with the norm inherited from $B_3^{\frac{1}{3},\infty}$. A similar characterisation is given for $L^p$ spaces in \cite{Bask}, see also \cite{BahouriCheminDanchin} for a detailed discussion on Besov spaces.\\
This framework is used in \cite{cheskidov} to show global energy conservation for velocities in $L^3(0,T;B_3^{\frac{1}{3},c(\Nbb)}(\R^3))$, and in particular in $L^3(0,T;B_3^{\frac{1}{3},q}(\R^3)$ for any $q\in[1,\infty)$. Shortly after Shvydkoy proved a local energy conservation under the assumption 
\[\lim\limits_{|y|\to 0}\frac{1}{|y|}\int_{\R^d\times[0,T]}|u(x+y,t)-u(x,t)|^3\ \dxdt = 0,
\]
which is actually equivalent to the one proposed in \cite{cheskidov}, see \cite{shvydkoy} for details.\\
So far each result on sufficient regularity conditions for conservative solutions to ~\eqref{incompressEuler} assumed either periodic boundary conditions or dealt with the whole space domain $\R^d$. Recently however first results appeared treating the case of a bounded domain. Bardos and Titi have proved the following theorem, cf. \cite{BarTiti}
\begin{theorem}
Let $\Omega\subset\R^d$ be a bounded domain with  $C^2$  boundary, $\partial\Omega$; and let  $(u(x,t),p(x,t))$ be a weak solution of the incompressible Euler equations in $\Omega\times(0,T)$, i.e.,
%\begin{subequations}
\begin{equation}
 u\in L^\infty((0,T),L^2(\Omega))\,, \quad \nabla \cdot u=0\quad  \hbox{in} \quad \Omega\times(0,T)\,,\,  \quad u\cdot n =0 \quad \hbox{on} \quad \partial \Omega\times(0,T)\,,
 \end{equation}
%\begin{equation}
%\nabla \cdot u=0 \quad \hbox{ in} \quad  \Omega\quad
% \end{equation}
and for every test  vector field  $ \Psi (x,t)\in \mathcal D(\Omega \times (0,T)):$
\begin{equation}
\langle u,\partt\Psi\rangle + \langle u\otimes u : \nabla\Psi\rangle + \langle p,\nabla \cdot \Psi\rangle=0\label{Eulerdistribution}\,, \quad \hbox{in} \quad L^1(0,T)\,.\\\
  \end{equation}
%\end{subequations}
 %\end{document}
Assume that
\begin{equation}
u\in L^3((0,T); C^{0,\alpha}(\overline{\Omega})),
\end{equation} with $\alpha > \frac13$, then the  energy conservation holds true, that is:
\begin{equation}
\|u(.,t_2)\|_{L^2(\Omega)}= \|u(., t_1)\|_{L^2(\Omega)}\,, \quad \hbox{for every} \quad t_1, t_2 \in (0,T)\,.
\end{equation}
\end{theorem}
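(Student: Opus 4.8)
The plan is to adapt the Constantin--E--Titi argument recalled just above to a bounded domain, following Bardos--Titi \cite{BarTiti}. The principal new difficulty compared to the periodic or whole-space case is that the spatial mollification $u^\eps = u *_x \theta^\eps$ is only well defined on the set of points whose distance to $\partial\Omega$ exceeds $\eps$, and one cannot freely integrate by parts over $\Omega$ because boundary terms appear. So the argument must be localized away from $\partial\Omega$ and the boundary contributions controlled using the hypotheses $u\cdot n = 0$ on $\partial\Omega$ and the $C^{0,\alpha}$ regularity up to $\overline{\Omega}$.

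First I would introduce, for small $\delta>0$, the inner domain $\Omega_\delta = \{x\in\Omega : \dist(x,\partial\Omega)>\delta\}$ and a cutoff $\chi_\delta\in\Crm_c^\infty(\Omega)$ with $\chi_\delta = 1$ on $\Omega_{2\delta}$, $\supp\chi_\delta\subset\Omega_\delta$, and $|\nabla\chi_\delta|\leq C/\delta$. Mollifying the Euler system in space on $\Omega_\eps$ gives, exactly as before,
\begin{equation*}
\partt u^\eps + \diverg(u^\eps\otimes u^\eps) + \nabla p^\eps = \diverg\big(u^\eps\otimes u^\eps - (u\otimes u)^\eps\big)
\end{equation*}
on $\Omega_\eps\times(0,T)$. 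Testing this against $\chi_\delta^2\, u^\eps$ (with $\eps<\delta$ so everything is supported where $u^\eps$ is defined) and integrating over $\Omega\times(0,\tau)$, I would collect: (i) the time-derivative term, which yields $\frac12\frac{d}{dt}\int_\Omega \chi_\delta^2 |u^\eps|^2\,\dx$; (ii) the flux and pressure terms, which after integration by parts produce a genuine commutator term of the Constantin--E--Titi type plus lower-order terms carrying a factor $\nabla\chi_\delta^2$ (hence $O(1/\delta)$ but supported in the thin shell $\Omega_\delta\setminus\Omega_{2\delta}$); and (iii) the commutator right-hand side, handled by the identity \eqref{commutatoridentity} and the Besov/H\"older estimates \eqref{besovshift}--\eqref{besovgrad}, which show it is $O(\eps^{3\alpha-1})\to 0$.

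Next I would pass to the limit $\eps\to 0^+$ with $\delta$ fixed: the commutator term vanishes, the mollified quantities converge by standard properties of mollifiers (using $u\in L^3(0,T;C^{0,\alpha}(\overline\Omega))\subset L^3(0,T;L^\infty)$ and $u\in L^\infty(0,T;L^2)$), and one is left with an energy identity for $\chi_\delta^2|u|^2$ containing a remainder supported in the shell $\Omega_\delta\setminus\Omega_{2\delta}$ of the form $\int_0^\tau\int_{\Omega}(\tfrac12|u|^2+p)\,u\cdot\nabla(\chi_\delta^2)\,\dxdt$. The crux is then to show this shell remainder tends to $0$ as $\delta\to 0$. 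Here I would use $u\cdot n = 0$ on $\partial\Omega$ together with the $C^{0,\alpha}$ (indeed even $C^0$ up to the boundary) bound: the normal component $u\cdot n$ is $O(\dist(x,\partial\Omega)^\alpha)$ near $\partial\Omega$ wait---more carefully, since $\nabla\chi_\delta$ points essentially in the normal direction, $u\cdot\nabla\chi_\delta$ is comparable to $(u\cdot n)/\delta$, and $|u\cdot n|\lesssim \delta$ vanishes linearly by the boundary condition and continuity (one needs a trace/extension argument or the $C^{0,\alpha}$ bound applied to $u\cdot n$, which vanishes on $\partial\Omega$), so the integrand is bounded uniformly while the shell has measure $O(\delta)$; for the pressure one invokes that $p\in L^1$ via the elliptic equation $-\Delta p = \diverg\diverg(u\otimes u)$ with the natural boundary condition, as in \cite{BarTiti}. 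The main obstacle, and the step deserving the most care, is precisely this control of the boundary-shell term: establishing that $\int_{\Omega_\delta\setminus\Omega_{2\delta}}|u|\,|u\cdot\nabla\chi_\delta|\,\dx\to 0$ requires quantifying the decay of $u\cdot n$ toward the boundary and, for the pressure contribution, obtaining enough integrability of $p$ up to $\partial\Omega$ from the div-curl structure. Once the shell term is shown to vanish, letting $\delta\to 0$ yields
\begin{equation*}
\int_\Omega \tfrac12|u(x,\tau)|^2\,\dx = \int_\Omega \tfrac12|u(x,0)|^2\,\dx,
\end{equation*}
and since $\tau\in(0,T)$ is arbitrary (and by the time-continuity implicit in the weak formulation in $L^1(0,T)$), the claimed equality $\|u(\cdot,t_2)\|_{L^2(\Omega)} = \|u(\cdot,t_1)\|_{L^2(\Omega)}$ follows for all $t_1,t_2\in(0,T)$.
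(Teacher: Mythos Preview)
The paper does not contain a proof of this theorem: it is merely quoted from Bardos--Titi \cite{BarTiti} as one of the recent developments on bounded domains, and no argument is given in the text. So there is no ``paper's own proof'' to compare your proposal against.

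That said, your outline is essentially the Bardos--Titi strategy and is on the right track. One point deserves correction: you write that $|u\cdot n|\lesssim \delta$ ``vanishes linearly by the boundary condition and continuity,'' but with only $C^{0,\alpha}$ regularity up to $\overline\Omega$ and $u\cdot n=0$ on $\partial\Omega$, the correct bound is $|u(x)\cdot n|\lesssim \dist(x,\partial\Omega)^\alpha$, i.e.\ decay of order $\delta^\alpha$, not $\delta$. Fortunately this is still enough: on the shell $\Omega_\delta\setminus\Omega_{2\delta}$ one has $|u\cdot\nabla\chi_\delta|\lesssim \delta^{\alpha-1}$, the shell has measure $O(\delta)$, and $|u|$ is bounded, so the kinetic contribution is $O(\delta^\alpha)\to 0$. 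The pressure part is the more delicate point you correctly flag: one needs to extract enough boundary regularity for $p$ from the elliptic problem $-\Delta p=\partial_i\partial_j(u_iu_j)$ with the Neumann-type condition coming from $u\cdot n=0$ and the $C^2$ boundary; in \cite{BarTiti} this is done by showing $p\in L^{3/2}(0,T;C^{0,\alpha}(\overline\Omega))$ (or a comparable estimate), which then makes the pressure shell term vanish by the same $\delta^\alpha$ mechanism. You should make that step explicit rather than leaving it at ``$p\in L^1$.''
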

\noindent See also \cite{RobRod}, where the incompressible Euler equations are considered on $\T\times\R_+$ with the same impermeability condition as in the above theorem, and energy conservation is proven under an integral condition similar to that of Shvydkoy \cite{shvydkoy}.\\
We conclude this section with an Onsager-type statement for the inhomogeneous incompressible Euler system on $\T\times(0,T)$:
\begin{equation}\label{inhom}
\begin{aligned}
\partial_t(\rho u)+\diverg(\rho u\otimes u)+\nabla p&=0,\\
\partial_t\rho+\diverg(\rho u)&=0,\\
\diverg u&=0.
\end{aligned}
\end{equation}
The following theorem was proven by Feireisl et al. in \cite{FeGSGW}
\begin{theorem}\label{inhomonsager}
Let $\rho$, $u$, $p$ be a solution of~\eqref{inhom} in the sense of distributions. Assume
\begin{equation}\label{besovhypo}
u\in B_p^{\alpha,\infty}((0,T)\times\Tbb^d),\hspace{0.3cm}\rho, \rho u\in B_q^{\beta,\infty}((0,T)\times\Tbb^d),\hspace{0.3cm}p\in L^{p^*}_{loc}((0,T)\times\Tbb^d)
\end{equation}
for some $1\leq p,q\leq\infty$ and $0\leq\alpha,\beta\leq1$ such that
\begin{equation}\label{exponenthypo}
\frac{2}{p}+\frac{1}{q}=1,\hspace{0.3cm}\frac{1}{p}+\frac{1}{p^*}=1,\hspace{0.3cm}2\alpha+\beta>1.
\end{equation}
Then the energy is locally conserved, i.e.
\begin{equation}\label{localenergy}
\partial_t\left(\frac{1}{2}\rho|u|^2\right)+\diverg\left[\left(\frac{1}{2}\rho|u|^2+p\right)u\right]=0
\end{equation}
in the sense of distributions on $(0,T)\times\Tbb^d$.
\end{theorem}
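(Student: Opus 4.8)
The plan is to run the mollification--commutator argument of the proof of Theorem~\ref{thmsimple} on the momentum and continuity equations together, the new feature being the density weight $\rho$ in the momentum flux. Let $\theta^\eps$ be a space--time mollifier on $(0,T)\times\T$ and write $f^\eps$ for the mollification of $f$; then the space--time analogues of \eqref{besovshift}--\eqref{besovgrad} hold, and mollifying \eqref{inhom} gives, on $\T\times(\eps,T-\eps)$,
\[\partt(\rho u)^\eps+\diverg(\rho u\otimes u)^\eps+\nabla p^\eps=0,\qquad\partt\rho^\eps+\diverg(\rho u)^\eps=0,\qquad\diverg u^\eps=0.\]
Writing $m:=\rho u$, I would multiply the first identity by $u^\eps$ and rearrange it, via the manipulations below, into
\[\partt\!\left(\tfrac12\rho^\eps|u^\eps|^2\right)+\diverg\!\left(\tfrac12|u^\eps|^2 m^\eps+p^\eps u^\eps\right)=-\,u^\eps\!\cdot\partt E^{(1)}_\eps-u_i^\eps\,\partial_j G^{(2)}_{\eps,ij},\]
with summation over $i,j$ and with commutators $E^{(1)}_\eps,G^{(2)}_\eps$ introduced below. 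Since $\tfrac12|u|^2 m=\tfrac12\rho|u|^2u$, the left-hand side converges to the left-hand side of \eqref{localenergy}, and it then remains to show that the right-hand side vanishes in $\mathcal D'$.

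The algebraic core is the bookkeeping. In the time term, write $m^\eps=\rho^\eps u^\eps+E^{(1)}_\eps$ with $E^{(1)}_\eps:=(\rho u)^\eps-\rho^\eps u^\eps$ the Constantin--E--Titi commutator of $\rho$ and $u$ (cf.\ the identity~\eqref{commutatoridentity}); the chain rule gives $u^\eps\!\cdot\partt m^\eps=\partt(\tfrac12\rho^\eps|u^\eps|^2)+\tfrac12|u^\eps|^2\partt\rho^\eps+u^\eps\!\cdot\partt E^{(1)}_\eps$. For the convective term the decisive choice is to use $\rho u_i u_j=u_i\,m_j$ and split $(\rho u_i u_j)^\eps=u_i^\eps m_j^\eps+G^{(2)}_{\eps,ij}$, $G^{(2)}_{\eps,ij}:=(u_i m_j)^\eps-u_i^\eps m_j^\eps$, i.e.\ to subtract $u_i^\eps m_j^\eps$ and \emph{not} $m_i^\eps u_j^\eps$: only then does the smooth part have the right contractions, namely, summing over $i,j$,
\[u_i^\eps\,\partial_j\!\left(u_i^\eps m_j^\eps\right)=m^\eps\!\cdot\nabla\!\left(\tfrac12|u^\eps|^2\right)+|u^\eps|^2\diverg m^\eps=\diverg\!\left(\tfrac12|u^\eps|^2 m^\eps\right)+\tfrac12|u^\eps|^2\diverg m^\eps,\]
so that $u^\eps\!\cdot\diverg(\rho u\otimes u)^\eps=\diverg(\tfrac12|u^\eps|^2 m^\eps)+\tfrac12|u^\eps|^2\diverg m^\eps+u_i^\eps\partial_j G^{(2)}_{\eps,ij}$. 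The pressure contributes the benign $u^\eps\!\cdot\nabla p^\eps=\diverg(p^\eps u^\eps)$ using $\diverg u^\eps=0$, with no commutator. Adding the three pieces, the undesirable terms assemble into $\tfrac12|u^\eps|^2\big[\partt\rho^\eps+\diverg m^\eps\big]$, which is identically zero by the \emph{exact} mollified continuity equation; what survives is precisely the displayed identity. The contrast worth noting: had one subtracted $m_i^\eps u_j^\eps$, the leftover $m_i^\eps u_j^\eps\partial_j u_i^\eps$ would not collapse, forcing $m^\eps=\rho^\eps u^\eps+E^{(1)}_\eps$ to be inserted here too and producing a \emph{cubic} remainder $E^{(1)}_\eps\,u^\eps\,\nabla u^\eps$ that is not controllable under the given scaling.

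For the passage to the limit: $\rho^\eps\to\rho$ in $L^q_{loc}$, $u^\eps\to u$ in $L^p_{loc}$, $p^\eps\to p$ in $L^{p^*}_{loc}$, so by Hölder together with $\tfrac2p+\tfrac1q=1$ and $\tfrac1p+\tfrac1{p^*}=1$ one has $\tfrac12\rho^\eps|u^\eps|^2\to\tfrac12\rho|u|^2$, $\tfrac12|u^\eps|^2m^\eps\to\tfrac12|u|^2m$ and $p^\eps u^\eps\to pu$ in $L^1_{loc}$, whence the left-hand side tends in $\mathcal D'$ to $\partt(\tfrac12\rho|u|^2)+\diverg[(\tfrac12\rho|u|^2+p)u]$. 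The commutators are estimated from the Constantin--E--Titi identity and \eqref{besovshift}, \eqref{besovapprox}: both $E^{(1)}_\eps$ and $G^{(2)}_\eps$ are bounded by $C\eps^{\alpha+\beta}$ in $L^r$ with $\tfrac1r=\tfrac1p+\tfrac1q$ --- this is where $u\in B_p^{\alpha,\infty}$ and $\rho,\rho u\in B_q^{\beta,\infty}$ enter --- while $\norm{u^\eps}_{L^p}\le C$ and $\norm{\partt u^\eps}_{L^p}+\norm{\nabla u^\eps}_{L^p}\le C\eps^{\alpha-1}$ by \eqref{besovgrad}. Testing the remainder against $\phi\in\Crm_c^\infty((0,T)\times\T)$ and integrating by parts so that the surviving derivative lands on a smooth factor, one is reduced to integrals of the type $\iint E^{(1)}_\eps(\partt u^\eps)\phi$, $\iint G^{(2)}_\eps(\nabla u^\eps)\phi$, $\iint E^{(1)}_\eps u^\eps\,\partt\phi$, $\iint G^{(2)}_\eps u^\eps\,\nabla\phi$; Hölder applies with equality of exponents, $\tfrac1r+\tfrac1p=1$, and bounds them by $C(\eps^{2\alpha+\beta-1}+\eps^{\alpha+\beta})$, which tends to $0$ exactly because $2\alpha+\beta>1$. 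Hence the remainder converges to $0$ in $\mathcal D'$ and \eqref{localenergy} follows. The main obstacle is therefore organizational rather than analytic: arranging the commutator splitting in the second step so that only the two bilinear remainders above occur; the three scaling relations of \eqref{exponenthypo} are then precisely what is needed to close those estimates.
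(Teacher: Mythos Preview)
The paper does not give its own proof of this theorem; it is quoted from \cite{FeGSGW} as part of the survey. Your proposal is correct and is precisely the argument of \cite{FeGSGW}: mollify in space--time, multiply the mollified momentum equation by $u^\eps$, use the exact mollified continuity equation to cancel the term $\tfrac12|u^\eps|^2\big[\partt\rho^\eps+\diverg(\rho u)^\eps\big]$, and close via the bilinear commutator estimates on $(\rho u)^\eps-\rho^\eps u^\eps$ and $(u\otimes\rho u)^\eps-u^\eps\otimes(\rho u)^\eps$, which is exactly the method showcased in Section~\ref{secSimple} and the proof for the homogeneous case in Section~\ref{secEuler}.

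Your organizational remark --- that one must split the convective flux as $u_i^\eps m_j^\eps$ rather than $m_i^\eps u_j^\eps$ --- is the one point worth highlighting: it is the reason the hypothesis is placed on $\rho u\in B_q^{\beta,\infty}$ rather than on $\rho$ alone, and it is what allows the H\"older pairing $\tfrac1r+\tfrac1p=\tfrac2p+\tfrac1q=1$ to close without a leftover triple product. The rest of your bookkeeping (the scaling $\eps^{\alpha+\beta}$ for the commutators in $L^r$, $\eps^{\alpha-1}$ for $\nabla_{t,x}u^\eps$ in $L^p$, and the $L^1_{loc}$ convergence of $\rho^\eps|u^\eps|^2$, $|u^\eps|^2m^\eps$, $p^\eps u^\eps$ via $\tfrac2p+\tfrac1q=1$ and $\tfrac1p+\tfrac1{p^*}=1$) is correct and matches the source.
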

Notice that, unlike for the homogeneous case, Besov regularity is assumed here also in time. This is due to the fact that the term involving a time derivative is no longer linear, hence generating a commutator estimate. This time regularity assumption can be disposed of in the vacuumless case by introducing the momentum $m=\rho u$ and formulating the equations in terms of $\rho$ and $m$, as was done in \cite{LeSh}. However then a regularity assumption is required on the pressure. A different still approach was proposed by Chen and Yu, who transfer the time regularity of the density onto the spatial regularity via the continuity equation. Thus a regularity assumption is imposed on $\nabla_x\rho$, see \cite{ChenYu} for details.\\
Further we remark that Theorem \ref{inhomonsager} implies also conservation of total energy
\[E(t) = \frac{1}{2}\int_{\T}\rho(x,t)|u(x,t)|^2\ \dx,
\]
however only in the sense of distributions. To guarantee that this quantity is the same at all times further assumptions need to be made, namely $\rho, u\in L^\infty(\T\times(0,T))$ and 
\[\sup_{t\in[0,T]}(\norm{\rho}_{B_q^{\beta,\infty}(\Tbb^d)}+\norm{\rho u}_{B_q^{\beta,\infty}(\Tbb^d)})<\infty,
\]
with $\beta >0$, cf. Corollary 3.3 in \cite{FeGSGW}.\\
Finally we state a result specific to the inhomogeneous case, which demonstrates that by strengthening regularity assumptions on the velocity it is possible to relax the assumptions on the density - and still obtain energy conservation.
\begin{corollary}\label{BV}
Let $\rho\in (BV\cap L^\infty)((0,T)\times\Tbb^d)$ and $u\in (B_3^{\alpha,\infty}\cap L^\infty)((0,T)\times\Tbb^d)$ be a solution of~\eqref{inhom}, where $\alpha>\frac{1}{3}$. Then the energy is conserved.
\end{corollary}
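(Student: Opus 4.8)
\section*{Proof proposal for Corollary~\ref{BV}}

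The plan is to run the scheme behind Theorem~\ref{inhomonsager}: mollify the equations of~\eqref{inhom} in the full space--time variables, test to produce a smoothed energy balance, and show the commutator errors vanish; the only new point is to replace the Besov bound on $\rho$ required there by an interpolation bound available for $BV\cap L^\infty$ functions. Fix $\phi\in\Crm_c^\infty(0,T)$ and a symmetric mollifier $\theta^\eps$ as in the proof of Theorem~\ref{thmsimple}, now acting in $(t,x)$; choose $\eps$ small enough that $\supp\phi\subset(2\eps,T-2\eps)$, so all translates below stay inside the cylinder. Writing $v^\eps$ for the mollification of $v$, one mollifies the momentum and continuity equations, tests the former against $u^\eps\phi$ and the latter against $\tfrac12|u^\eps|^2\phi$, integrates over $\Tbb^d\times(0,T)$, and subtracts. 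Because $\diverg u^\eps=(\diverg u)^\eps=0$, the pressure contributes $\int_0^T\!\int_{\Tbb^d}\diverg(p^\eps u^\eps)\,\phi\,\dxdt=0$ by periodicity, so $p$ never enters the argument -- this is why no hypothesis on $p$ is needed. After integrating by parts in $x$ and then in $t$, and setting $w_\eps\coloneqq(\rho u)^\eps-\rho^\eps u^\eps$, one obtains the identity
\[
\int_0^T\!\!\int_{\Tbb^d}\tfrac12\rho^\eps|u^\eps|^2\,\phi'\,\dxdt
=-\int_0^T\!\!\int_{\Tbb^d}\big[(\rho u\otimes u)^\eps-(\rho u)^\eps\otimes u^\eps\big]:\nabla u^\eps\,\phi\,\dxdt
-\int_0^T\!\!\int_{\Tbb^d} w_\eps\cdot\big(\partt u^\eps\,\phi+u^\eps\,\phi'\big)\,\dxdt.
\]
Since $u\in B_3^{\alpha,\infty}\cap L^\infty$, one has $u^\eps\to u$ in $L^3$ with $u^\eps$ bounded, and since $\rho\in L^\infty$, $\rho^\eps\to\rho$ in every $L^p$ with $p<\infty$; hence $\rho^\eps|u^\eps|^2\to\rho|u|^2$ in $L^1(\Tbb^d\times(0,T))$ and the left-hand side converges to $\int_0^T\!\int_{\Tbb^d}\tfrac12\rho|u|^2\,\phi'\,\dxdt$.

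It then remains to send the right-hand side to zero. Two ingredients are used. The first is the Constantin--E--Titi commutator identity underlying~\eqref{commutatoridentity}, applied to general pairs: $(fg)^\eps-f^\eps g^\eps=\int\theta^\eps(y)\,\delta_y f\otimes\delta_y g\,{\rm d}y-\delta^\eps f\otimes\delta^\eps g$, with $\delta_y f(z)\coloneqq f(z-y)-f(z)$ and $\delta^\eps f\coloneqq f^\eps-f$. The second is the observation that for $\rho\in BV\cap L^\infty$ one has $\norm{\delta_y\rho}_{L^1}\lesssim|y|\,\norm{\rho}_{BV}$ and $\norm{\delta_y\rho}_{L^\infty}\le2\norm{\rho}_{L^\infty}$, so interpolation gives $\norm{\delta_y\rho}_{L^3}\lesssim|y|^{1/3}$, and likewise $\norm{\delta^\eps\rho}_{L^3}\lesssim\eps^{1/3}$: in effect $\rho$ behaves like an element of $B_3^{1/3,\infty}$. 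Combining these with the bounds $\norm{\delta_y u}_{L^3}\lesssim|y|^\alpha$, $\norm{\delta^\eps u}_{L^3}\lesssim\eps^\alpha$, $\norm{\nabla u^\eps}_{L^3}\lesssim\eps^{\alpha-1}$, $\norm{\partt u^\eps}_{L^3}\lesssim\eps^{\alpha-1}$ (all in space--time, from $u\in B_3^{\alpha,\infty}$), with the pointwise Leibniz estimates $|\delta_y(\rho u)|\le\norm{\rho}_\infty|\delta_y u|+\norm{u}_\infty|\delta_y\rho|$ and $|\delta_y(u\otimes u)|\lesssim\norm{u}_\infty|\delta_y u|$ (so that $\norm{\delta_y(\rho u)}_{L^3}\lesssim\eps^{1/3}$ and $\norm{w_\eps}_{L^{3/2}}\lesssim\eps^{\alpha+1/3}$), and with H\"older's inequality (exponents $\tfrac13+\tfrac13+\tfrac13=1$ for the flux term, $\tfrac13+\tfrac13=1$ and $\tfrac32$ versus $L^\infty$ for the $w_\eps$-terms), every term on the right is bounded by $C\eps^{2\alpha-2/3}$ times the fixed quantities $\norm{\phi}_{\Crm^1}$, $\norm{\rho}_{BV\cap L^\infty}$, $\norm{u}_{B_3^{\alpha,\infty}\cap L^\infty}$. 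Since $\alpha>\tfrac13$ this exponent is positive, the right-hand side vanishes as $\eps\to0^+$, and passing to the limit gives $\int_0^T\phi'(t)\big(\int_{\Tbb^d}\tfrac12\rho(x,t)|u(x,t)|^2\,\dx\big)\,\dt=0$ for all $\phi\in\Crm_c^\infty(0,T)$; hence $t\mapsto\int_{\Tbb^d}\tfrac12\rho(x,t)|u(x,t)|^2\,\dx$ is constant, which is the asserted energy conservation.

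The crux, and the only place where genuine care is needed, is precisely the density commutator: one cannot invoke the estimate behind Theorem~\ref{inhomonsager} verbatim, since its exponent relation $\tfrac2p+\tfrac1q=1$ would force $q=1$, $p=\infty$, whereas we control $u$ only in $B_3^{\alpha,\infty}\cap L^\infty$, not in $B_\infty^{\alpha,\infty}$. The remedy is to trade the full $BV$ regularity of $\rho$ (one derivative in $L^1$) against the surplus $\alpha-\tfrac13$ in the regularity of $u$: the available ``budget'' is $\tfrac13+\alpha+(\alpha-1)=2\alpha-\tfrac23>0$, and the $L^\infty$ bounds on $\rho$ and $u$ serve only to tame the cross terms generated when the difference operators hit the products $\rho u$ and $u\otimes u$. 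A minor bookkeeping point is that all Besov and $BV$ difference quotients must be taken in the full variable $(t,x)$ and the analysis localized away from $t=0,T$; both are handled by the compact support of $\phi$ and cause no difficulty.
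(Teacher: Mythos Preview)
Your argument is correct and is essentially the approach the paper has in mind: the corollary is stated without proof, but the key fact---mentioned explicitly in Section~\ref{secdissipative}---is the embedding $BV\cap L^\infty\subset B_3^{1/3,\infty}$, which together with the Leibniz estimate puts $\rho$ and $\rho u$ in $B_3^{1/3,\infty}$ so that Theorem~\ref{inhomonsager} applies with $p=q=3$, $\beta=\tfrac13$, $2\alpha+\beta>1\iff\alpha>\tfrac13$. Your derivation of the mollified energy identity and the commutator bounds is exactly the engine of that theorem, and your exponent count $2\alpha-\tfrac23>0$ is the specialization of $2\alpha+\beta-1>0$ to $\beta=\tfrac13$.

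One small point of commentary to clean up: your final paragraph overstates the obstruction. You write that Theorem~\ref{inhomonsager} ``would force $q=1$, $p=\infty$''---but that is only if you insist on cashing in the full $BV$ regularity as $\beta=1$. Once you have observed (as you do two paragraphs earlier) that $\rho$ ``behaves like an element of $B_3^{1/3,\infty}$'', the theorem applies verbatim with $p=q=3$; there is no need for $B_\infty^{\alpha,\infty}$ control of $u$. The genuine advantage of your direct route to the \emph{global} balance is rather that it sidesteps the pressure hypothesis $p\in L^{3/2}_{\mathrm{loc}}$ that Theorem~\ref{inhomonsager} carries (needed there only to make sense of the flux $pu$ in the \emph{local} identity~\eqref{localenergy}); since you integrate over $\T$ before passing to the limit and use $\diverg u^\eps=0$, the pressure drops out entirely. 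That is worth saying, and is the honest reason your reworking is not merely a quotation of the theorem.
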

%%%%%%%%%%%%%%%%%%%%%%%%%
%%%%%%%%%%%%%%%%%%%%%%%%%
%%%%%%%%%%%%%%%%%%%%%%%%%
\section{Dissipative solutions}\label{secdissipative}
In the previous section we focused on the conservative direction of Onsager's conjecture. Here we will briefly discuss the possibilities of energy dissipation (or even creation) by weak solutions to the incompressible Euler equations, as well as other related systems. First however we make an observation that the case of hyperbolic systems of conservation does not pose much challenge. It is in particular well-known that there exist discontinuous shock solutions of regularity $BV\cap L^\infty$, which dissipate energy. See \cite{dafermos} for a detailed discussion. Since $BV\cap L^\infty\subset B_3^{\frac{1}{3},\infty}$, cf. \cite{FeGSGW} for proof, we can see that the main result of Section \ref{secSimple} is sharp.

The situation turned out to be much more complicated for the incompressible Euler. Here, similarly as for the simple equation ~\eqref{simpleConservlaw} the weak solutions are non-unique - as first shown by Scheffer in \cite{scheffer}, who constructed a compactly supported weak solution of class $L^2(\R^2\times\R)$. See also \cite{shnirel} for a simpler construction. Thus there is no uniqueness of weak solutions for ~\eqref{incompressEuler}, even for an identically zero initial condition. Physically, weak solutions like these constructed by Scheffer are pure nonsense. One can hardly expect water in a glass to instantaneously develop tempestuous behaviour, and then as suddenly become calm again. One might therefore hope that imposing an admissibility condition, motivated by the second law of thermodynamics, might help recover uniqueness, just as in the case of a single scalar conservation law. However this is not true either. In 2010 De Lellis and Sz\'{e}kelyhidi Jr. \cite{DLS10} have shown the following theorem regarding the system ~\eqref{incompressEuler} with initial condition $u(x,0) = u_0(x)$.
\begin{theorem}
Let $d\geq 2$. There exist bounded and compactly supported divergence-free vector fields $u_0$ for which there are
\begin{enumerate}
    \item infinitely many weak solutions satisfying both the strong and local energy inequalities.
    \item weak solutions satisfying the strong energy inequality but not the global energy equality
    \item weak solutions satisfying the weak energy inequality but not the strong energy inequality.
\end{enumerate}
\end{theorem}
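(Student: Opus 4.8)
The plan is to follow the convex integration and Baire category scheme of De Lellis and Sz\'ekelyhidi, reformulating \eqref{incompressEuler} as a differential inclusion in the spirit of Tartar. First I would relax the system: call a pair $(v,M)$ a \emph{subsolution} subordinate to a prescribed energy density $e=e(x,t)\ge 0$ (smooth, with $e(\cdot,t)$ compactly supported) if $v\in C([0,T];L^2_w(\R^d))$ is divergence-free, $M$ is a bounded symmetric-matrix field, the linear relations $\partt v+\diverg M=0$ and $\diverg v=0$ hold in $\mathcal{D}'$, and $(v,M)$ lies pointwise in the $\Lambda$-convex hull $K_e^\Lambda$ of the constraint set $K_e=\{(v,M):M=v\otimes v+p\,\mathrm{Id}\text{ for some scalar }p,\ \tfrac12|v|^2=e\}$, with $\Lambda$ the wave cone of the linear part. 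The elementary but crucial point is that a subsolution lying in $K_e$ itself, not merely its hull, at almost every $(x,t)$ is an exact weak solution of \eqref{incompressEuler}, with pressure read off from $M-v\otimes v$ and with $\tfrac12|u(x,t)|^2=e(x,t)$ pointwise; in particular $\norm{u(\cdot,t)}_{L^2}^2=2\int_{\R^d}e(x,t)\,\dx$ along any such solution, which is what lets me read the energy (in)equalities directly off the chosen profile.

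Second, I would fix compactly supported divergence-free data $u_0$ and a profile $e$ compatible with it, and consider the set $X_0$ of subsolutions subordinate to $e$ having trace $u_0$ at $t=0$. Endowed with a metric inducing the weak-$L^2$ topology on space-time, $X_0$ is a bounded, complete metric space, nonempty provided a strict subsolution exists --- which one produces by an explicit preliminary construction from $u_0$. The functional $I(v)=\int_0^T\norm{v(\cdot,t)}_{L^2}^2\,\dt$ is weakly lower semicontinuous and bounded, hence of Baire class one (a pointwise limit of continuous functions), so its set of points of continuity is residual in $X_0$. The crux of the dichotomy is that at every point of continuity the subsolution must in fact lie in $K_e$ a.e., i.e. be an exact solution with energy $2\int e$; and since $X_0$ contains infinitely many distinct elements --- obtained by perturbing a strict subsolution in disjoint space-time regions --- residuality then yields infinitely many exact solutions sharing the prescribed energy profile.

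Third, and this is where I expect the real difficulty, comes the perturbation lemma driving the Baire argument: any \emph{strict} subsolution can be modified by superimposing a localized high-frequency oscillation, adapted to the wave cone $\Lambda$ and built from stationary Beltrami-type flows, so as to raise $I$ by a definite amount controlled by the gap $\int(e-\tfrac12|v|^2)$, while keeping $(v,M)$ a subsolution and not moving the $t=0$ trace. This rests on a geometric lemma describing $K_e^\Lambda$ explicitly and producing, through any interior point, a sufficiently long $\Lambda$-segment along which the $|v|^2$-average strictly increases; localizing the resulting oscillations so that energy is added exactly where $e-\tfrac12|v|^2>0$, and threading them into an iteration that makes $K_e$-valued solutions dense in $X_0$, is the delicate part. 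For item (1) a further refinement is needed --- arranging the perturbations so that the limiting $u$ also satisfies the local energy inequality $\partt(\tfrac12|u|^2)+\diverg((\tfrac12|u|^2+p)u)\le 0$ in $\mathcal{D}'$ --- which restricts how the profile $e$ may be approached.

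Finally, the three statements follow from three choices of $e$. Taking $e$ strictly decreasing in $t$ forces the constructed solutions to have strictly decreasing $L^2$ norm: they satisfy the strong energy inequality yet violate the global energy equality \eqref{energyconservincompressEuler}, since the energy is strictly positive and strictly dissipated, which already gives item (2); adding the local-energy refinement upgrades such solutions to item (1), and the disjoint-support perturbations produce infinitely many of them. For item (3) I would instead pick $e$ non-monotone in time --- first strictly decreasing, then increasing again but remaining strictly below $\int_{\R^d} e(x,0)\,\dx$ --- so that along the resulting solution there are times $0<t_1<t_2$ with $\norm{u(\cdot,t_2)}_{L^2}>\norm{u(\cdot,t_1)}_{L^2}$, violating the strong energy inequality, while $\norm{u(\cdot,t)}_{L^2}\le\norm{u_0}_{L^2}$ for all $t$, so that the weak energy inequality still holds. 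In each case the identity $\norm{u(\cdot,t)}_{L^2}^2=2\int_{\R^d}e(x,t)\,\dx$ along the constructed solutions is precisely what converts the chosen profile into the asserted behaviour.
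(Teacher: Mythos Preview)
The paper does not actually prove this theorem: it is a survey article, and the statement is quoted from De Lellis and Sz\'ekelyhidi~\cite{DLS10} with only the one-line remark that ``the construction of these solutions followed the scheme of \emph{convex integration}, which was first used in the context of fluid dynamics in~\cite{DLS09}.'' So there is no in-paper proof to compare against.

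That said, your sketch is a faithful outline of the argument in~\cite{DLS09,DLS10}: the Tartar differential-inclusion reformulation, the subsolution space with the weak-$L^2$ Baire category argument on the functional $I$, the perturbation lemma based on $\Lambda$-segments in the relaxed constraint set, and the reading of the three conclusions off suitably chosen energy profiles $e$. One point to be careful about: in~\cite{DLS10} the initial data $u_0$ for which the theorem holds are not arbitrary --- they are themselves produced by the construction (as traces at $t=0$ of a strict subsolution), so ``fix compactly supported divergence-free data $u_0$'' overstates things slightly; the existence assertion is about \emph{some} such $u_0$, exactly as the theorem is phrased. Also, for item~(1) the local energy inequality is obtained in~\cite{DLS10} not by a further refinement of the perturbation step but by choosing the pressure profile so that $\tfrac12|u|^2+p$ is constant in space (a generalized Bernoulli condition), which trivializes the flux term; your description of this as ``restricting how the profile $e$ may be approached'' is a bit vague by comparison. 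Otherwise the plan is sound and matches the cited source.
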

By the strong and local energy inequalities it is meant that ~\eqref{localenergyintro} and ~\eqref{globalenergyintro}, respectively, are satisfied as inequalities. The weak energy inequality is defined to be
\[\int_{\R^d}\frac{1}{2}|u(x,t)|^2\ \dx = \int_{\R^d}\frac{1}{2}|u(x,0)|^2\ \dx.
\]

The construction of these solutions followed the scheme of \emph{convex integration}, which was first used in the context of fluid dynamics in \cite{DLS09}. Later dissipative weak solutions were constructed in \cite{DLS14} within the space of H\"{o}lder continuous functions, with exponent $\alpha = \frac{1}{10}$. See also \cite{DLS13}. Subsequently many authors (eg. \cite{BDLIS}, \cite{BDLS}, \cite{Ise13}) contributed efforts to increase the H\"{o}lder exponent, building upon new perturbation profiles introduced in \cite{DaneriSz}. Finally, the Onsager-critical exponent of $\frac{1}{3}$ was reached by Isett in \cite{isett}, who constructed non-conservative solutions within the class $C([0,T];C^{\frac13}(\Tbb^3))$, and Buckmaster et al. in \cite{BucDeLSzV}, where a solution of the same regularity is constructed with strictly decreasing kinetic energy. Thus Conjecture ~\ref{onsagerconjecture} is fully resolved and proven to be true.

Let us remark that the question of ill-posedness within the class of dissipative weak solutions was also asked in the context of other systems of mathematical physics. We refer the reader to \cite{ChiFeiKre}, where the equations of a compressible heat conducting gas are discussed. Further in \cite{DonFeiMar} and in \cite{CarFeiGSG} the Euler-Korteweg-Poisson system and Euler systems with non-local interactions are considered, respectively, while in \cite{FeiGSG} the Savage-Hutter model is investigated.

\section{Onsager-type statements for other systems}\label{secOnsagertype}

\subsection{Compressible Euler}
Consider now the isentropic Euler equations:
\begin{equation}\label{compressible}
\begin{aligned}
\partial_t(\rho u)+\diverg(\rho u\otimes u)+\nabla p(\rho)&=0,\\
\partial_t\rho+\diverg(\rho u)&=0.
\end{aligned}
\end{equation}
Notice that here the pressure is a given function, assumed here to depend only on the density, rather than being merely a Lagrange multiplier as in the incompressible Euler system. The local form of energy equality for this system takes the form
\begin{equation}\label{energycompressibleEuler}
\partial_t\left(\frac{1}{2}\rho|u|^2+P(\rho)\right)+\diverg\left[\left(\frac{1}{2}\rho|u|^2+p(\rho)+P(\rho)\right)u\right]=0,
\end{equation}
where $P$ is the pressure potential defined by    
\begin{equation*}
P(\rho)=\rho\int_1^\rho\frac{p(r)}{r^2}\ \mathrm{d}r.
\end{equation*}
The following theorem, proven in \cite{FeGSGW} is an analogue of Theorem \ref{inhomonsager}, requiring Besov regularity in both space and time.
\begin{theorem}\label{compressibleonsager}
Let $\rho$, $u$ be a solution of~\eqref{compressible} in the sense of distributions. Assume 
\begin{equation*}
u\in B_3^{\alpha,\infty}((0,T)\times\Tbb^d),\hspace{0.3cm}\rho, \rho u\in B_3^{\beta,\infty}((0,T)\times\Tbb^d),\hspace{0.3cm}
0 \leq \underline{\rho} \leq \rho \leq \overline{\rho} \ \mbox{a.a. in} (0,T)\times\Tbb^d,
\end{equation*}
for some constants $\underline{\rho}$, $\overline{\rho}$, and
$0\leq\alpha,\beta\leq1$ such that
\begin{equation}\label{alphabeta}
\beta > \max \left\{ 1 - 2 \alpha; \frac{1 - \alpha}{2} \right\}.
\end{equation}
Assume further that $p \in \Crm^2[\underline{\vr}, \overline{\vr}]$, and, in addition
\begin{equation}\label{pressure}
p'(0) = 0 \ \mbox{as soon as}\ \underline{\vr} = 0.
\end{equation}
Then the energy is locally conserved, i.e. ~\eqref{energycompressibleEuler} holds in the sense of distributions on $(0,T)\times\Tbb^d$.
\end{theorem}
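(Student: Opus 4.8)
The plan is to run the mollification-and-commutator scheme of Section~\ref{secSimple} and of the proof of Theorem~\ref{inhomonsager}, the genuinely new ingredients being the nonlinear pressure $p(\rho)$, its potential $P(\rho)$, and the fact that $u$ is no longer divergence free. First I would mollify \emph{both} balance laws in space and time --- time regularisation is now unavoidable since every term is nonlinear --- obtaining
\[
\partt(\rho u)^\eps+\diverg(\rho u\otimes u)^\eps+\nabla p(\rho)^\eps=0,\qquad
\partt\rho^\eps+\diverg(\rho u)^\eps=0 .
\]
Then I would multiply the mollified momentum equation by $u^\eps$ and the mollified continuity equation by $P'(\rho^\eps)-\tfrac12|u^\eps|^2$, add the two, and use the chain rule together with the pointwise identity $\rho P'(\rho)-P(\rho)=p(\rho)$ to reassemble the left-hand sides into the \emph{mollified} local energy balance
\[
\partt\Big(\tfrac12\rho^\eps|u^\eps|^2+P(\rho^\eps)\Big)+\diverg\Big[\big(\tfrac12\rho^\eps|u^\eps|^2+p(\rho^\eps)+P(\rho^\eps)\big)u^\eps\Big]
\]
plus a finite collection of commutator errors: the ``kinetic'' ones built from $(\rho u\otimes u)^\eps-\rho^\eps u^\eps\otimes u^\eps$ and $(\rho u)^\eps-\rho^\eps u^\eps$, the ``pressure'' one built from $p(\rho)^\eps-p(\rho^\eps)$, and the $P'$-multiplier error $P'(\rho^\eps)\,\diverg\big[(\rho u)^\eps-\rho^\eps u^\eps\big]$. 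Testing against $\phi\in\Crm_c^\infty((0,T)\times\Tbb^d)$ and integrating by parts, the proof reduces to showing that each error tends to $0$ as $\eps\to0^+$.

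The kinetic and momentum commutators are treated exactly as in Theorem~\ref{inhomonsager}: using the Constantin--E--Titi algebraic identity \eqref{commutatoridentity} one writes each ``mollified product minus product of mollifications'' as a difference-quotient average against $\theta^\eps$ plus corrections that are products of the oscillations $u-u^\eps$ and $\rho-\rho^\eps$, and then Hölder's inequality with exponents forced by $\tfrac23+\tfrac13=1$, the bounds $\underline{\rho}\le\rho\le\overline{\rho}$, and the space-time versions of \eqref{besovshift}--\eqref{besovgrad} for $u\in B_3^{\alpha,\infty}$ and $\rho,\rho u\in B_3^{\beta,\infty}$ control the worst term (the one paired with $\nabla u^\eps$) by $C\eps^{2\alpha+\beta-1}$ times Besov norms. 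The pressure commutator is handled by Taylor's theorem exactly as in \eqref{Taylor1}--\eqref{nonlinearestimate}: since $p\in\Crm^2[\underline{\rho},\overline{\rho}]$,
\[
\big|p(\rho)^\eps-p(\rho^\eps)\big|\le C\big[(\rho^\eps-\rho)^2+(\rho(\cdot)-\rho)^2*_x\theta^\eps\big],
\]
with $C$ depending only on $\norm{p}_{\Crm^2[\underline{\rho},\overline{\rho}]}$, so that, paired with $\diverg u^\eps$, this term is $O(\eps^{\alpha+2\beta-1})$. Finally, after one integration by parts the $P'$-multiplier error produces the factor $P''(\rho^\eps)\,\nabla\rho^\eps$; since $\|(\rho u)^\eps-\rho^\eps u^\eps\|_{L^{3/2}}\lesssim\eps^{\alpha+\beta}$ and $\|\nabla\rho^\eps\|_{L^3}\lesssim\eps^{\beta-1}$, this contributes $O(\eps^{\alpha+2\beta-1})$ as well --- \emph{provided} $P''$ is bounded.

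Collecting exponents, every error is $o(1)$ as $\eps\to0^+$ precisely when $2\alpha+\beta>1$ and $\alpha+2\beta>1$, i.e.\ when $\beta>\max\{1-2\alpha,\tfrac{1-\alpha}{2}\}$, which is \eqref{alphabeta}; standard mollifier convergence then disposes of the remaining (non-error) terms and yields \eqref{energycompressibleEuler} in $\mathcal D'((0,T)\times\Tbb^d)$. I expect the main obstacle to be exactly the control of the pressure-potential quantities in the vacuum regime $\underline{\rho}=0$: one computes $P''(\rho)=p'(\rho)/\rho$, which remains bounded as $\rho\to0$ if and only if $p'(0)=0$ --- this is precisely hypothesis~\eqref{pressure} --- and one has to check in addition that $P$, $P'$ and the associated Taylor remainders admit constants uniform down to $\rho=0$, while organising the algebra so that no term ever requires dividing by $\rho^\eps$. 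Once this is settled, the rest --- fixing the exact multipliers and matching each of the many commutators to the correct pair of Besov exponents --- is routine bookkeeping.
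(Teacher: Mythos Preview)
Your proposal is correct and matches the approach the paper indicates: the paper does not give a self-contained proof of this theorem but refers to \cite{FeGSGW} and to ``the machinery presented in Section~\ref{secSimple}'', and your sketch is precisely that argument --- space--time mollification, the multipliers $u^\eps$ and $P'(\rho^\eps)-\tfrac12|u^\eps|^2$, the Constantin--E--Titi bilinear identity for the kinetic commutators, the Taylor-remainder estimate~\eqref{nonlinearestimate} for $p(\rho)^\eps-p(\rho^\eps)$, and the observation that $P''(\rho)=p'(\rho)/\rho$ forces hypothesis~\eqref{pressure} in the vacuum case. The exponent bookkeeping you give, producing $O(\eps^{2\alpha+\beta-1})$ from the convective commutators and $O(\eps^{\alpha+2\beta-1})$ from the pressure and $P'$-multiplier terms, is exactly how condition~\eqref{alphabeta} arises in \cite{FeGSGW}.
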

We remark that the $\Crm^2$ assumption on the pressure is required for the machinery presented in Section \ref{secSimple} to work. This is clearly satisfied by the isentropic pressure law $p(\rho) = \kappa\rho^\gamma$ for $\gamma>1$, provided there is no vacuum (i.e. $\underline{\rho}>0$) or $\gamma>2$. Similarly as in the incompressible case, conservation of total energy can be deduced under similar additional assumptions. The following theorem in turn shows that one can dispose of the Besov regularity in time, at the expense of assuming $BV\cap\Crm$ regularity in space. 
\begin{theorem}\label{compressibleonsagerBV}
Assume that the pressure $p$ satisfies
\begin{equation} \label{pres2}
p \in C^2(0, \infty) \cap C[0, \infty), \ p(0) = 0.
\end{equation}
Let $\rho \in L^\infty((0,T) \times \T)$, $u \in L^\infty((0,T) \times \T)$ be a solution of~\eqref{compressible} in the sense of distributions. In addition, assume that
\[
u(t) \in BV \cap C(\T),\ \rho(t) \in BV \cap C(\T) \ \mbox{for a.a.}\ t \in [0,T]
\]
and
\begin{equation} \label{E1-thm}
u, \ \rho \in L^\infty(0,T; C(\T)), \ \nabla u, \ \nabla \rho \in L^\infty_{{\rm weak}-(*)} (0,T; \mathcal{M}(\T)).
\end{equation}
Then the energy is locally conserved, i.e.
\begin{equation*}
\partial_t\left(\frac{1}{2}\rho|u|^2+P(\rho)\right)+\diverg\left[\left(\frac{1}{2}\rho|u|^2+p(\rho)+P(\rho)\right)u\right]=0
\end{equation*}
in the sense of distributions on $(0,T)\times\T$.
\end{theorem}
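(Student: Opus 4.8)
The plan is to adapt the mollification-and-commutator machinery of Theorem~\ref{thmsimple} and Theorem~\ref{compressibleonsager}, the only new feature being that we must trade the Besov-in-time regularity assumed in Theorem~\ref{compressibleonsager} for the hypotheses~\eqref{E1-thm}. First I would mollify the momentum equation and the continuity equation in space only, obtaining
\[
\partt(\rho u)^\eps+\diverg(\rho u\otimes u)^\eps+\nabla p(\rho)^\eps=0,\qquad
\partt\rho^\eps+\diverg(\rho u)^\eps=0.
\]
Following the scheme of Section~\ref{secSimple}, I would rewrite the nonlinear terms by inserting $\diverg(u^\eps\otimes(\rho u)^\eps)$ and $\nabla p(\rho^\eps)$, so that the right-hand sides become commutators of the form $\diverg\bigl((\rho u\otimes u)^\eps-u^\eps\otimes(\rho u)^\eps\bigr)$ and $\nabla\bigl(p(\rho)^\eps-p(\rho^\eps)\bigr)$. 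The crucial point is that under~\eqref{E1-thm} the spatial translates of $\rho$ and $u$ are controlled: since $\rho(t),u(t)\in BV\cap C(\T)$ with uniform-in-time total-variation bound via $\nabla\rho,\nabla u\in L^\infty_{{\rm weak}-(*)}(0,T;\mathcal M(\T))$, the modulus of continuity estimates $\norm{u(\cdot+y,t)-u(\cdot,t)}_{L^1}\leq |y|\,\norm{\nabla u(t)}_{\mathcal M}$ (and likewise for $\rho$) hold, which together with the $L^\infty$ bounds give $\norm{u(\cdot+y,t)-u(\cdot,t)}_{L^3}\leq C|y|^{1/3}$ and similarly for $\rho$ — i.e.\ we recover exactly the $B_3^{1/3,\infty}$-type control in space that the commutator estimates require, with constants uniform in $t$ and, after a dominated-convergence argument, integrable in $t$.

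Next I would multiply the mollified momentum equation by $u^\eps$, integrate against a test function $\phi\in\Crm_c^\infty((0,T)\times\T)$, and handle the time-derivative term. Here, because we have \emph{not} mollified in time and $\partt(\rho u)$ is only a distribution, I would write the kinetic-energy term as in \cite{FeGSGW}: using the continuity equation, $\int\partt(\rho u)^\eps\cdot u^\eps\phi=\frac12\int\partt(\rho^\eps|u^\eps|^2)\phi+(\text{commutator in }\partt)$, but the $\partt$-commutator is now killed precisely because no time mollification is present — instead one uses the spatial-mollification identity $\partt\rho^\eps+\diverg(\rho u)^\eps=0$ exactly, so the problematic term reduces to $\int(\diverg(\rho u)^\eps-\diverg(\rho^\eps u^\eps))\,\tfrac12|u^\eps|^2\phi$, again a spatial commutator estimated by the $B_3^{1/3,\infty}$-in-space bounds above. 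The pressure-potential term $\partt P(\rho)$ is treated the same way, using $P\in C^2(0,\infty)\cap C[0,\infty)$, $P(0)=0$ (which follows from~\eqref{pres2}), and Taylor expansion as in~\eqref{Taylor1}–\eqref{nonlinearestimate}; the hypothesis $p(0)=0$ is what guarantees the relevant constants stay finite up to the vacuum set $\{\rho=0\}$, so that no lower bound $\underline\rho>0$ is needed.

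Then I would collect all commutator remainders $R_\eps$ and show $R_\eps\to 0$. Each term is, pointwise in $t$, bounded by a product of factors of the shape $\norm{u^\eps(\cdot,t)-u(\cdot,t)}_{L^3}$, $\norm{\rho^\eps(\cdot,t)-\rho(\cdot,t)}_{L^3}$, $\norm{\nabla u^\eps(\cdot,t)}_{L^3}$, $\norm{\nabla\rho^\eps(\cdot,t)}_{L^3}$, $\norm{r_\eps}_{L^{3/2}}$, estimated exactly as in~\eqref{besovshift}–\eqref{besovgrad} but now with $\alpha=\beta=1/3$ \emph{in the spatial variable}, giving $O(\eps^{2\cdot(1/3)+1/3-1})=O(1)$ pointwise, which is not yet enough — so the decay must come from the fact that $u,\rho$ actually lie in $C(\T)$ for a.e.\ $t$, hence their spatial modulus of continuity is $o(1)$, not merely $O(|y|^{1/3})$; this upgrades each estimate to $o(1)$ in $\eps$ pointwise in $t$, and dominated convergence (using the uniform $L^\infty$ and $\mathcal M$ bounds as integrable majorant) finishes the passage $\int_0^T R_\eps\,\dt\to 0$. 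Finally, the left-hand side converges by standard mollifier properties, yielding the local energy equality in $\mathcal D'((0,T)\times\T)$.

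\medskip
The main obstacle I anticipate is the bookkeeping for the time-derivative terms $\partt(\rho^\eps|u^\eps|^2)$ and $\partt P(\rho^\eps)$ without any time mollification: one must be careful that these are genuine distributional time derivatives and that the manipulations (chain rule, integration by parts in $\phi$) are legitimate for the merely-$L^\infty$ functions $\rho^\eps,u^\eps$ — this is where the continuity equation in its \emph{exactly mollified} spatial form, rather than an approximate one, has to be exploited, and where the $C(\T)$-regularity (rather than mere $BV$) is genuinely used to get convergence to zero rather than just boundedness of the remainders.
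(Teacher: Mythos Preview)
The paper is a survey and does not give a proof of Theorem~\ref{compressibleonsagerBV}; the result is quoted from \cite{FeGSGW} without argument, so there is no in-paper proof to benchmark against.

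Your plan is sound and close to the original proof in \cite{FeGSGW}, but you take an avoidable detour. Routing the spatial commutators through the embedding $BV\cap L^\infty\subset B_3^{1/3,\infty}$ leaves you, as you yourself note, with only $O(1)$ control on the remainders, which you then repair a posteriori via the continuity hypothesis. The cleaner route---and the one actually used in the source paper---is to skip the $L^3$--$L^{3/2}$ H\"older pairing and use $L^\infty$--$L^1$ directly: for a.e.\ $t$ one has $\|u^\eps(\cdot,t)-u(\cdot,t)\|_{L^\infty(\T)}\to 0$ because $u(t)\in C(\T)$, while $\|\nabla u^\eps(\cdot,t)\|_{L^1(\T)}\leq\|\nabla u(t)\|_{\mathcal M(\T)}$ is bounded uniformly in $\eps$ and $t$ by~\eqref{E1-thm}. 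Thus a typical remainder such as $\int_{\T}|u^\eps-u|^2\,|\nabla u^\eps|\,\dx\leq\|u^\eps-u\|_{L^\infty}^2\|\nabla u^\eps\|_{L^1}\to 0$ pointwise in $t$, with a $t$-uniform majorant permitting dominated convergence. This splitting makes the separate roles of the $BV$ hypothesis (gradient bound in $L^1$) and the $C$ hypothesis (uniform approximation in $L^\infty$) completely transparent and avoids the borderline-exponent arithmetic.

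Your identification of the time-derivative manipulation as the main technical obstacle is accurate. Since only space is mollified, $\partt u^\eps$ is not a priori a function, and the identity linking $\partt(\rho u)^\eps\cdot u^\eps$ to $\tfrac12\partt(\rho^\eps|u^\eps|^2)$ must be obtained entirely from the spatially mollified continuity and momentum equations---both of which \emph{do} yield $L^\infty$ time derivatives for $\rho^\eps$ and $(\rho u)^\eps$---together with further purely spatial commutators. This works, but the write-up needs care, particularly because no lower bound on $\rho$ is assumed.
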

A straightforward consequence is that the total energy
\[E(t) = \int_{\T}\frac{1}{2}\rho(x,t)|u(x,t)|^2 + P(\rho(x,t))\ \dx
\]
is constant in time (with no additional assumptions).
\subsection{Navier-Stokes}
Consider the density-dependent incompressible Navier-Stokes equations:
\begin{equation}\label{incompressNS}
\begin{aligned}
\partt (\rho u) + \diverg(\rho u\otimes u) - \mu\Delta u &= -\nabla p + \rho f,\\
\partt \rho + \diverg (\rho u) &= 0,\\
\diverg u &= 0.
\end{aligned}
\end{equation}
As before $\rho:\tplus\to\R$ and $u:\tplus\to\R^d$ are density and velocity, respectively, of a fluid and $p:\tplus\to\R$ is the pressure. Further $f:\tplus\to\R^d$ is an external force and $\mu$ is the (constant) viscosity coefficient. Similarly as for the Euler equations, if $(\rho,u)$ is a classical solution of ~\eqref{incompressNS}, then multiplying the momentum balance by $u$, one can easily see that the following energy identity is true
\begin{equation}\label{energyincomressNS}
E(t) - E(0) = - \mu \int_0^t \norm{\nabla u}_{L^2(\T)}^2\ \mathrm{d}s + \int_0^t \int_{\T} \rho u \cdot f \ \dx\mathrm{d}s,
\end{equation}
where $E(t) = \int_{\T}\frac{1}{2}\rho(x,t)|u(x,t)|^2\ \dx$.
It is well-known that, under certain assumptions, Leray-Hopf weak solutions exist globally and satisfy ~\eqref{energyincomressNS} as an inequality. However, since \emph{a priori} $u\in L^2([0,T];H^1(\T))$, dissipation of energy is possible, so that the energy equality may fail to hold. In the paper of Leslie and Shvydkoy \cite{LeSh} a sufficient condition is provided, in terms of Besov regularity, for a weak solution to satisfy ~\eqref{energyincomressNS}.
\begin{theorem}
Let $(\rho, u, p)$ be a weak solution to the density-dependent incompressible Navier-Stokes equations on $\T$, $d>1$.  Assume $(\rho, u,p)$ satisfies
\begin{align}
&u\in L^2([0,T];H^1(\T)), \ 0<\underline{\rho}\le \rho \le \overline{\rho}<\infty, \text{ and } f\in L^2(\T\times[0,T]),\\
&\rho\in L^a([0,T];B_{a}^{\frac{1}{3},\infty}),\;
u\in L^b([0,T]; B_{b}^{\frac13,c(\Nbb)}),\;
p\in L^{\frac{b}{2}}([0,T]; B^{\frac13,\infty}_{\frac{b}{2}}),\quad \frac1a + \frac3b=1,\; b \ge 3.
\end{align}
Then $(\rho, u, p)$ satisfies the energy balance relation ~\eqref{energyincomressNS} on the time interval $[0,T]$. 
\end{theorem}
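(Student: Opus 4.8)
The plan is to adapt the commutator/mollification scheme of Section~\ref{secSimple} — now applied to the density-dependent incompressible Navier--Stokes system — following the approach of Leslie and Shvydkoy. First I would mollify the momentum equation in space (a mollification in time is harmless, as already noted), obtaining
\[
\partt (\rho u)^\eps + \diverg\left((\rho u\otimes u)^\eps\right) - \mu\Delta u^\eps = -\nabla p^\eps + (\rho f)^\eps.
\]
The natural test object is $u^\eps$ (or, to respect the momentum formulation, a quantity built from $(\rho u)^\eps$ and $u^\eps$); multiplying and integrating over $\T\times[0,\tau]$ produces, on the left, the smoothed kinetic energy $\tfrac12\int_\T (\rho u)^\eps\cdot u^\eps\,\dx$ plus the viscous term $\mu\int_0^\tau\norm{\nabla u^\eps}_{L^2}^2\,\dt$ and the forcing term, and on the right a sum of commutator remainders coming from the transport nonlinearity $\diverg(\rho u\otimes u)$ and from the mismatch between $(\rho u)^\eps$ and $\rho^\eps u^\eps$ in the time-derivative term. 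The viscous term is linear, so it passes to the limit without any commutator — this is why only $u\in L^2H^1$ is needed there and no extra Besov regularity on $u$ beyond $B_b^{1/3,c(\Nbb)}$ is required to control it; likewise the forcing term converges by standard mollifier estimates using $f\in L^2$ and boundedness of $\rho$.

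The core of the argument is then to show that the commutator remainders vanish as $\eps\to 0^+$. Here I would reuse the algebraic identity ~\eqref{commutatoridentity}-type decomposition: write the trilinear commutator $(\rho u\otimes u)^\eps - \rho^\eps u^\eps\otimes u^\eps$ (and its variants involving $(\rho u)^\eps$) as a combination of terms of the form $(\rho^\eps-\rho)$, $(u^\eps-u)$, and the quadratic remainder $r_\eps$ of Section~\ref{secEuler}, each paired against $\nabla u^\eps$. One then estimates each piece in $L^1(\T)$ by Hölder's inequality, distributing exponents as $\tfrac1a + \tfrac1b + \tfrac1b + \cdots$ so as to land exactly on the scaling budget $\tfrac1a + \tfrac3b = 1$. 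Using the Besov-shift and Besov-gradient estimates — the analogues of ~\eqref{besovshift}--\eqref{besovgrad} in the Littlewood--Paley norm ~\eqref{besovnorm2}, with exponents $a$ for $\rho$ and $b$ for $u$ — each term carries a factor $\eps^{\mu}$ with total exponent
\[
\mu = \underbrace{\tfrac13}_{\rho}+\underbrace{\tfrac13}_{u}+\underbrace{\tfrac13}_{u} - \underbrace{(1-\tfrac13)}_{\nabla u^\eps\ \text{loses}\ \eps^{-2/3}}
\]
from the two velocity differences plus the density difference against the gradient, i.e. formally $\mu = 0$ at the endpoint $\tfrac13$. This is where the refined space $B_b^{1/3,c(\Nbb)}$ enters: the $o(1)$ (rather than $O(1)$) behaviour of $2^{q/3}\norm{\Delta_q u}_{L^b}$ upgrades the borderline $\eps^0$ bound to a genuine $o(1)$, exactly as in the Cheskidov et al. refinement for incompressible Euler. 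The pressure term $\nabla p^\eps$ integrated against $u^\eps$ is handled by moving the derivative onto $u^\eps$ and using $\diverg u^\eps = 0$ up to a commutator $\diverg u^\eps = \diverg u^\eps - (\diverg u)^\eps = 0$ — in fact incompressibility is preserved exactly by mollification, so the pressure drops out entirely and the hypothesis $p\in L^{b/2}B_{b/2}^{1/3,\infty}$ is used only in the momentum-formulation variant to control the term coming from rewriting $\partt(\rho u)^\eps\cdot u^\eps$ via the continuity equation.

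Finally, I would pass to the limit in the smoothed energy identity: the left-hand side $\tfrac12\int_\T (\rho u)^\eps\cdot u^\eps\,\dx$ converges to $E(\tau)$ for a.e.\ $\tau$ by strong $L^2$-type convergence (here $\rho,u\in L^\infty$ and the Besov regularity give enough compactness, together with the established weak continuity), the viscous and forcing terms converge as noted, and the remainder vanishes — yielding ~\eqref{energyincomressNS} on $[0,T]$. The main obstacle, as in all these results, is the careful bookkeeping of the commutator decomposition for the \emph{trilinear} term $\diverg(\rho u\otimes u)$ combined with the non-constant density in the time-derivative term: one must split it so that every summand is controlled by the available norms with the exponents summing correctly to the scaling line $\tfrac1a+\tfrac3b=1$, and one must verify that the borderline term is precisely the one cured by the $c(\Nbb)$ refinement rather than, say, a density term that would require $\rho\in B_a^{1/3,c(\Nbb)}$ — checking that $B_a^{1/3,\infty}$ suffices for $\rho$ is the delicate point.
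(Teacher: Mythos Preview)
The paper does not actually prove this theorem: it appears in the survey Section~5.2 as a statement of the result of Leslie and Shvydkoy \cite{LeSh}, with no proof given. So there is no ``paper's own proof'' to compare your proposal against.

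Your sketch is broadly the right shape for a commutator/mollification argument in the spirit of Sections~\ref{secSimple}--\ref{secEuler}, and the scaling bookkeeping $\tfrac1a+\tfrac3b=1$ together with the $c(\Nbb)$ refinement curing the endpoint is indeed the mechanism. One point deserves sharpening, however. You describe testing against $u^\eps$ as the primary route, with the pressure dropping out by $\diverg u^\eps=0$, and relegate the pressure hypothesis to a ``momentum-formulation variant''. But the paper explicitly remarks (end of Section~\ref{secEuler}, discussing \cite{LeSh}) that Leslie--Shvydkoy work \emph{in} the momentum formulation --- introducing $m=\rho u$ and treating $(\rho,m)$ as the unknowns --- precisely in order to avoid assuming Besov regularity in time. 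In that formulation the natural test function is built from $m/\rho$ rather than $u$ itself, the pressure term no longer vanishes by incompressibility alone, and $p\in L^{b/2}B^{1/3,\infty}_{b/2}$ is a genuine working hypothesis, not an afterthought. Conversely, if you test directly against $u^\eps$ as you propose, the term $\partt(\rho u)^\eps\cdot u^\eps$ does not yield $\partt E^\eps$ without invoking the continuity equation, which produces commutators involving $\partt\rho$ --- exactly the time-regularity issue the momentum formulation is designed to sidestep. So the two routes are not interchangeable, and the pressure assumption is not optional in the argument actually carried out in \cite{LeSh}.
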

Consider now the compressible Navier-Stokes equations
\begin{equation}
\begin{aligned}
\label{CNS}
\partt\rho+\diverg(\rho u)&=0\\
\partt(\rho u)+\diverg(\rho u\otimes u)+\nabla p-2\mu\Delta u-\lambda\nabla\diverg u&=0,
\end{aligned}
\end{equation}
defined on $\T\times[0,T]$, where the viscosity coefficients $\mu,\lambda$ satisfy $\mu>0$ and $2\mu+d\lambda\geq 0$.
The following theorem was proven by Yu in \cite{Yu}
\begin{theorem}
Let $(\rho, u)$ be a weak solution of \eqref{CNS} %in the sense of \cite{FNP,Lions}.
If
\begin{equation}
0\leq \rho(t,x)\leq \bar{\rho}<\infty,\;\;\text{ and }\nabla\sqrt{\rho}\in L^{\infty}(0,T;L^2(\T)),
\end{equation}
\begin{equation}
\label{condition for velocity}
u \in L^p(0,T;L^q(\T))\quad\text{ for any }  \frac{1}{p}+\frac{1}{q}\leq\frac{5}{12}, \text{ and } q\geq 6 ,
\end{equation}
and
\begin{equation}
\label{initial velocity assp}
u_0\in L^{k}(\T), \;\frac{1}{k}+\frac{1}{q}\leq \frac{1}{2},
\end{equation}
 then such a weak solution
 $(\rho, u)$  satisfies the following energy identity
\begin{equation*}
\begin{split}
&\int_{\T}\left(\frac{1}{2}\rho|u|^2+\frac{\rho^{\gamma}}{\gamma-1}\right)\ \dx+2\mu\int_0^T\int_{\T}|\nabla u|^2\ \dxdt
\\&\quad\quad\quad\quad\quad\quad+\lambda\int_0^T\int_{\T}|\diverg u|^2\ \dxdt=
\int_{\T}\left(\frac{1}{2}\rho_0|u_0|^2+\frac{\rho_0^{\gamma}}{\gamma-1}\right)\ \dx
\end{split}
\end{equation*}
 for any $t\in [0,T].$
\end{theorem}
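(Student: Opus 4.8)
The plan is to run the mollify-and-estimate-commutators scheme of Section~\ref{secSimple}, now for the compressible Navier--Stokes system \eqref{CNS} with $p(\rho)=\rho^\gamma$, $\gamma>1$, keeping track of two features absent from the pure conservation-law toy model: the linear viscous operators, and the possible presence of vacuum, which forces us to carry the density only through the bounded quantity $\rho$ and the $H^1$-quantity $\sqrt\rho$. Let $\theta^\eps$ be a standard spatial mollifier and write $g^\eps=g*_x\theta^\eps$; mollification in time is routine and will be suppressed as in the earlier proofs. First I would mollify both equations,
\[
\partt\rho^\eps+\diverg(\rho u)^\eps=0,\qquad \partt(\rho u)^\eps+\diverg(\rho u\otimes u)^\eps+\nabla p^\eps-2\mu\Delta u^\eps-\lambda\nabla\diverg u^\eps=0,
\]
test the mollified momentum equation against $u^\eps$, integrate over $\T$ and then over $(0,t)$, and organize the resulting terms into: viscous terms, pressure term, inertial terms, and boundary terms in time.

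The viscous part is immediate: $-2\mu\int_\T\Delta u^\eps\cdot u^\eps=2\mu\int_\T|\nabla u^\eps|^2$ and $-\lambda\int_\T\nabla\diverg u^\eps\cdot u^\eps=\lambda\int_\T|\diverg u^\eps|^2$, and since $\nabla u\in L^2(\T\times(0,T))$ for any finite-energy weak solution, both converge to the dissipation terms $2\mu\int_0^t\int_\T|\nabla u|^2$ and $\lambda\int_0^t\int_\T|\diverg u|^2$. For the pressure one writes $\int_\T\nabla p^\eps\cdot u^\eps=-\int_\T p^\eps\diverg u^\eps$ and, exactly as for Theorem~\ref{compressibleonsager}, combines it with a renormalized form of the continuity equation (legitimate since $\rho\in L^\infty$ and $\nabla u\in L^2(\T\times(0,T))$) to recover $\frac{\mathrm{d}}{\mathrm{d}t}\int_\T\frac{\rho^\gamma}{\gamma-1}$ up to a $\Crm^2$-type commutator error in $\rho$, which is controlled by the argument of Section~\ref{secSimple} using the bound $\sqrt\rho\in L^\infty(0,T;H^1(\T))$.

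The core is the inertial block $\int_\T\partt(\rho u)^\eps\cdot u^\eps+\int_\T\diverg(\rho u\otimes u)^\eps\cdot u^\eps$. Writing $(\rho u)^\eps=\rho^\eps u^\eps+[(\rho u)^\eps-\rho^\eps u^\eps]$, using the mollified continuity equation to replace $\partt\rho^\eps$, and integrating by parts, one isolates $\frac{\mathrm{d}}{\mathrm{d}t}\int_\T\frac12\rho^\eps|u^\eps|^2$ plus two commutator remainders built from $(\rho u)^\eps-\rho^\eps u^\eps$ and $(\rho u\otimes u)^\eps-(\rho u)^\eps\otimes u^\eps$. To show these vanish as $\eps\to0^+$, I would prove a commutator lemma of Lions type: for the mollifier one has $\|\nabla\theta^\eps*g\|_{L^s}\le C\eps^{-(1/r-1/s)}\|g\|_{L^r}$ ($r\le s$) and $\|g^\eps-g\|_{L^r}\to0$; feeding in $\rho\in L^\infty$ together with the factorization $\rho u=\sqrt\rho\,(\sqrt\rho u)$ and $\nabla\sqrt\rho\in L^\infty(0,T;L^2(\T))$ to handle the gradient of the momentum near vacuum, and then \emph{interpolating} the two available scales of regularity for $u$, namely $u\in L^p(0,T;L^q(\T))$ and $u\in L^2(0,T;H^1(\T))$, one turns every remainder into an $O(\eps^\delta)$ quantity with $\delta>0$; the numerology $\frac1p+\frac1q\le\frac{5}{12}$ and $q\ge6$ is precisely what forces $\delta>0$ in each of these estimates. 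This is the main obstacle: unlike in Section~\ref{secSimple}, the regularity of $u$ is prescribed only in Lebesgue scales, so the single Besov estimate there must be replaced by a two-parameter interpolation, and it must be carried out while respecting the vacuum structure carried by $\sqrt\rho$.

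It remains to pass to the limit in $\int_\T\frac12\rho^\eps|u^\eps|^2\,\dx\big|_{0}^{t}$. For a.e.\ fixed $t$ we have $u(t)\in L^q(\T)\hookrightarrow L^2(\T)$ (since $q\ge6$ and $\T$ has finite measure), hence $u^\eps(t)\to u(t)$ in $L^2(\T)$, and with $0\le\rho\le\overline{\rho}$ this gives $\int_\T\frac12\rho^\eps|u^\eps|^2\to\int_\T\frac12\rho|u|^2$; the identity then extends to all $t\in[0,T]$ by the weak-in-time continuity of $\rho u$ and $\rho$. At $t=0$ one must also control the boundary terms produced by the integrations by parts in time --- $\int_\T(\rho u)^\eps(0)\cdot u^\eps(0)$ and the corresponding commutator --- and it is here that the hypothesis $u_0\in L^k(\T)$ with $\frac1k+\frac1q\le\frac12$ enters: it gives $u_0$ just enough integrability (in particular $u_0\in L^2(\T)$, but slightly better, matching the exponent of $u$ at nearby times) for those terms to converge to $\int_\T\frac12\rho_0|u_0|^2\,\dx$. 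Assembling the four pieces yields the asserted energy identity.
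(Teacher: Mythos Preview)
The paper does not contain a proof of this theorem. It is stated in Section~\ref{secOnsagertype} as a result of Yu~\cite{Yu}, introduced by the sentence ``The following theorem was proven by Yu in~\cite{Yu}'', and no argument is given or sketched. There is therefore nothing in the paper against which to compare your proposal.

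That said, your outline is broadly the right shape and is consistent with the commutator machinery developed in Sections~\ref{secSimple} and~\ref{secEuler}: mollify, test the regularized momentum equation with $u^\eps$, separate viscous, pressure and inertial contributions, and pass to the limit. The main departure from the Besov setting of this survey, which you correctly identify, is that the regularity of $u$ is given only in Lebesgue scales, so the single Besov-type commutator estimate must be replaced by a Lions-type commutator lemma combined with interpolation between $L^p(0,T;L^q)$ and $L^2(0,T;H^1)$; this is indeed the mechanism in~\cite{Yu}, and the exponent condition $\tfrac1p+\tfrac1q\le\tfrac{5}{12}$ arises exactly from closing that interpolation. Your treatment of the pressure via a renormalization of the continuity equation and of the initial trace via $u_0\in L^k$ also matches the structure of Yu's argument. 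Where your sketch is thinnest is the claim that the commutator involving $\nabla\sqrt\rho$ and the factorization $\rho u=\sqrt\rho\,(\sqrt\rho\,u)$ yields an $O(\eps^\delta)$ bound; in~\cite{Yu} this step requires a careful version of the Lions commutator lemma adapted to the product $\rho u\otimes u$, and simply invoking $\nabla\sqrt\rho\in L^\infty_t L^2_x$ does not by itself give the needed decay without tracking the precise exponents. If you intend to supply a full proof you should make that estimate explicit.
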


\subsection{Magnetohydrodynamics}
We consider now the equations of ideal magnetohydrodynamics (MHD for short) in three spatial dimensions.
\begin{equation}\label{ideal MHD}
\begin{aligned}
\partial_t u + \diverg(u\otimes u)  &= -\nabla p -\frac12\nabla|b|^2 + b\cdot\nabla b,\\
\partial_t b + \diverg(b\otimes b) &= b \cdot \nabla u,\\
\diverg u & = \diverg b = 0.
\end{aligned}
\end{equation}
where $b$ is the magnetic field acting on the fluid.
A straightforward adaptation of the method used in \cite{ConstETiti} was emplyed by Caflisch et al. \cite{Cafetal} to prove the following theorem on global energy conservation for this system defined on the torus.
\begin{theorem}\label{caflisch}
Let $d = 2,3$ and let $(u,b)$ be a weak solution of ~\eqref{ideal MHD}. Suppose that
\[u\in C([0,T],B_3^{\alpha,\infty}(\T)),\;\; b\in C([0,T],B_3^{\beta,\infty}(\T))
\]
with
\[\alpha > \frac{1}{3},\;\;\alpha+2\beta > 1.
\]
Then the following energy identity holds for any $t\in[0,T]$
\begin{equation}\label{energyMHD}
\int_{\T}|u(x,t)|^2 + |b(x,t)|^2\ \dx = \int_{\T}|u(x,0)|^2 + |b(x,0)|^2\ \dx.
\end{equation}
\end{theorem}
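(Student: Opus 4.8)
The plan is to mollify the MHD system in space, multiply the resulting equations by $u^\eps$ and $b^\eps$ respectively, integrate over $\T$ and over $[0,t]$, and then track the commutator errors exactly as in the proof for the incompressible Euler equations given above. The new feature compared to the Euler case is the coupling terms $b\cdot\nabla b$ in the momentum equation and $b\cdot\nabla u$ in the induction equation, together with the magnetic-pressure term $\frac12\nabla|b|^2$; all of these are bilinear (or can be written as divergences of bilinear expressions using $\diverg b = 0$), so the Constantin--E--Titi commutator machinery applies verbatim to each.

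\textbf{Key steps.} First I would mollify: from~\eqref{ideal MHD} one obtains
\begin{equation*}
\partial_t u^\eps + \diverg(u^\eps\otimes u^\eps) + \nabla\Big(p^\eps + \tfrac12(|b|^2)^\eps\Big) - \diverg(b^\eps\otimes b^\eps) = \diverg\big(u^\eps\otimes u^\eps - (u\otimes u)^\eps\big) - \diverg\big(b^\eps\otimes b^\eps - (b\otimes b)^\eps\big),
\end{equation*}
and analogously for the mollified induction equation
\begin{equation*}
\partial_t b^\eps + \diverg(b^\eps\otimes u^\eps) - \diverg(u^\eps\otimes b^\eps) = \diverg\big(b^\eps\otimes u^\eps - (b\otimes u)^\eps\big) - \diverg\big(u^\eps\otimes b^\eps - (u\otimes b)^\eps\big),
\end{equation*}
where I have used $\diverg b = 0$ to write $b\cdot\nabla b = \diverg(b\otimes b)$ and $b\cdot\nabla u = \diverg(b\otimes u)$, and $\diverg u = 0$ to rewrite the induction transport term. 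Second, I multiply the first mollified equation by $u^\eps$, the second by $b^\eps$, integrate over $\T$, and add; the pressure terms and the magnetic-pressure term drop out because $\diverg u^\eps = 0$, and the cross terms $\int b^\eps\cdot\nabla b^\eps\cdot u^\eps$ and $\int b^\eps\cdot\nabla u^\eps\cdot b^\eps$ combine (after integration by parts, again using the divergence-free constraints) to cancel up to a commutator. This yields
\begin{equation*}
\frac{d}{dt}\int_{\T}\tfrac12\big(|u^\eps|^2 + |b^\eps|^2\big)\ \dx = \text{(sum of four commutator terms)} : \nabla u^\eps \text{ or } : \nabla b^\eps.
\end{equation*}
Third, I estimate each commutator using the decomposition~\eqref{commutatoridentity} adapted to mixed products: for a generic bilinear term one has $(v\otimes w)^\eps - v^\eps\otimes w^\eps = r_\eps^{v,w} - (v-v^\eps)\otimes(w-w^\eps)$ with $r_\eps^{v,w}(x,t) = \int\theta^\eps(x')(v(x-x',t)-v(x,t))\otimes(w(x-x',t)-w(x,t))\ \mathrm{d}x'$, so that, by Hölder and the Besov estimates~\eqref{besovshift}--\eqref{besovgrad}, each term is bounded in absolute value by $C\big(\eps^{2\alpha} + \eps^{\alpha+\beta} + \eps^{2\beta}\big)\eps^{\max\{\alpha,\beta\}-1}$ times products of Besov norms, after integrating in time. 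The condition $\alpha > \frac13$ kills the $\eps^{3\alpha-1}$ term, while $\alpha + 2\beta > 1$ (equivalently $2\beta - 1 > -\alpha$, so $\eps^{2\beta}\eps^{\alpha-1} = \eps^{\alpha+2\beta-1}\to 0$) kills the terms involving $b$; note that $\alpha+2\beta>1$ together with $\alpha>\frac13$ also forces $\alpha+\beta > \frac12 + \frac{\beta}{2} > \frac12$ so the mixed term $\eps^{\alpha+\beta}\eps^{\alpha-1}$ or $\eps^{\alpha+\beta}\eps^{\beta-1}$ vanishes as well. Finally, the left-hand side converges to $\frac{d}{dt}\int_{\T}\frac12(|u|^2+|b|^2)\,\dx$ in the distributional sense by standard mollifier properties, and the assumed continuity $u,b\in C([0,T];L^3(\T))\subset C([0,T];L^2(\T))$ upgrades this to the pointwise-in-$t$ identity~\eqref{energyMHD} after integrating from $0$ to $t$.

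\textbf{Main obstacle.} The only genuinely new bookkeeping relative to the pure Euler proof is verifying that the magnetic coupling terms cancel at leading order rather than contributing a non-vanishing quadratic flux: one must check that $\int_{\T} u^\eps\cdot\diverg(b^\eps\otimes b^\eps)\,\dx + \int_{\T} b^\eps\cdot\diverg(b^\eps\otimes u^\eps)\,\dx$ reduces, after integration by parts and use of $\diverg u^\eps = 0 = \diverg b^\eps$, to a pure commutator with no surviving local term — this is where the precise algebraic structure of ideal MHD (the fact that the Lorentz force is the divergence of the Maxwell stress, which is \emph{the same} tensor that transports $b$) is used. Everything else is a routine replay of the commutator estimates already displayed, with the exponent arithmetic being the straightforward content of the hypotheses $\alpha>\frac13$ and $\alpha+2\beta>1$.
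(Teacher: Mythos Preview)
Your proposal is correct and follows exactly the approach the paper describes: the paper does not supply its own proof of this theorem but attributes it to Caflisch, Klapper and Steele as ``a straightforward adaptation of the method used in \cite{ConstETiti}'', and that adaptation is precisely what you outline --- mollify, test the two equations against $u^\eps$ and $b^\eps$, exploit the algebraic cancellation of the Lorentz/induction cross terms in the smooth setting, and close with the bilinear commutator identity \eqref{commutatoridentity}. One small cleanup: your blanket bound $C(\eps^{2\alpha}+\eps^{\alpha+\beta}+\eps^{2\beta})\eps^{\max\{\alpha,\beta\}-1}$ is not quite right as stated, since $\eps^{\max\{\alpha,\beta\}-1}$ underestimates $\|\nabla b^\eps\|_{L^3}$ when $\beta<\alpha$; but in fact each commutator is paired with a specific gradient ($(b\otimes b)^\eps - b^\eps\otimes b^\eps$ against $\nabla u^\eps$, the mixed ones against $\nabla b^\eps$), so the only exponents that actually arise are $3\alpha-1$ and $\alpha+2\beta-1$, which is exactly the content of the hypotheses.
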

This result was later extended by Kang and Lee \cite{KangLee}, who used an approach similar to that of Cheskidov et al.
\begin{theorem}
let $(u,b)$ be a weak solution of ~\eqref{ideal MHD}. Suppose that
\[u\in L^3([0,T],B_3^{\alpha,c(\Nbb)}(\R^3)),\;\; b\in L^3([0,T],B_3^{\beta,c(\Nbb)}(\R^3))
\]
with
\[\alpha \geq \frac{1}{3},\;\;\alpha+2\beta \geq 1.
\]
Then ~\eqref{energyMHD} holds.
\end{theorem}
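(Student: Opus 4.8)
The plan is to run the Littlewood--Paley form of the Constantin--E--Titi argument, in the refinement of Cheskidov et al., applied to the coupled system~\eqref{ideal MHD}; the point of using $S_N$ rather than a mollifier is that the $c(\Nbb)$ hypothesis will absorb the borderline cases $\alpha=\tfrac13$ and $\alpha+2\beta=1$. First I would apply the projection $S_N$ from~\eqref{besovnorm2} to the momentum and induction equations; since $S_N$ regularizes in space, $S_Nu$ and $S_Nb$ are admissible test functions, and testing the projected momentum equation against $S_Nu$ and the projected induction equation against $S_Nb$ and integrating over $\R^3\times[0,t]$ is legitimate. Because $\diverg S_Nu=\diverg S_Nb=0$, the terms $\nabla S_Np$ and $\tfrac12\nabla S_N|b|^2$ integrate to zero against $S_Nu$, so no hypothesis on the pressure is needed. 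One is left with
\begin{equation*}
\int_{\R^3}\tfrac12\big(|S_Nu(t)|^2+|S_Nb(t)|^2\big)\,\dx-\int_{\R^3}\tfrac12\big(|S_Nu(0)|^2+|S_Nb(0)|^2\big)\,\dx=\int_0^t\Pi_N(s)\, {\rm d} s,
\end{equation*}
where $\Pi_N$ gathers the cubic flux terms. Since $S_Nu(s)\to u(s)$ and $S_Nb(s)\to b(s)$ in $L^2(\R^3)$ for every $s$, the left-hand side converges, for each fixed $t$, to the energy difference in~\eqref{energyMHD}, and the theorem reduces to $\int_0^t\Pi_N(s)\, {\rm d} s\to0$ as $N\to\infty$.

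The structural fact to exploit is that, at the smooth level, the Lorentz force in the momentum equation and the magnetic stretching term in the induction equation cancel in the energy balance --- concretely $\int b\cdot\nabla(u\cdot b)=0$ when $\diverg b=0$ --- so that the principal (non-commutator) parts of \emph{all} the magnetic contributions to $\Pi_N$ cancel among themselves and against the would-be flux of $S_Nu\otimes S_Nb$. Writing each nonlinearity as a divergence and performing the Constantin--E--Titi/Bony commutator splitting (the Littlewood--Paley counterpart of~\eqref{commutatoridentity}, together with~\eqref{nonlinearestimate}-type bilinear bounds), $\Pi_N$ becomes a finite sum of one purely kinetic commutator $\int[S_N(u\otimes u)-S_Nu\otimes S_Nu]:\nabla S_Nu$ and three magnetic commutators of the forms $\int[S_N(b\otimes b)-S_Nb\otimes S_Nb]:\nabla S_Nu$ and $\int[S_N(u\otimes b)-S_Nu\otimes S_Nb]:\nabla S_Nb$ (and its stretching counterpart). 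A standard paraproduct splitting --- near-diagonal pieces, plus a frequency tail $\sum_{q\gtrsim N}$ carrying a geometric weight --- dominates the kinetic term by $\sum_q c_{q,N}\,2^{q(1-3\alpha)}a_q^3$ and each magnetic term by $\sum_q c_{q,N}\,2^{q(1-\alpha-2\beta)}a_qb_q^2$, where $a_q:=2^{q\alpha}\norm{\Delta_q u}_{L^3}$ and $b_q:=2^{q\beta}\norm{\Delta_q b}_{L^3}$ are bounded by the $B_3^{\alpha,\infty}$, $B_3^{\beta,\infty}$ norms and satisfy $a_q,b_q\to0$ by the $c(\Nbb)$ hypothesis, while $(c_{q,N})_q$ is, uniformly in $N$, a non-negative summable sequence supported on $q\gtrsim N$. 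Now $\alpha\ge\tfrac13$ and $\alpha+2\beta\ge1$ make the exponents $1-3\alpha$ and $1-\alpha-2\beta$ nonpositive: when an exponent is strictly negative the factor $2^{q(\cdot)}$ with $q\gtrsim N$ already forces the sum to $0$; when it equals zero the sum is bounded by a product of finitely many factors drawn from $\sup_{q\gtrsim N}a_q$ and $\sup_{q\gtrsim N}b_q$ times a Besov norm, and hence vanishes by the little-$o$ condition. Thus $\Pi_N(s)\to0$ for a.e.\ $s\in[0,T]$.

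Finally, the same inequalities furnish the $N$-uniform pointwise bound $|\Pi_N(s)|\le C\big(\norm{u(s)}_{B_3^{\alpha,\infty}}^3+\norm{u(s)}_{B_3^{\alpha,\infty}}\norm{b(s)}_{B_3^{\beta,\infty}}^2\big)$, whose right-hand side lies in $L^1(0,T)$ by the $L^3$-in-time hypotheses and Hölder's inequality; the dominated convergence theorem then gives $\int_0^t\Pi_N(s)\, {\rm d} s\to0$, which proves~\eqref{energyMHD}. The low-frequency block $\Delta_{-1}$ contributes only smooth, frequency-localized terms and is disposed of directly. The step I expect to be the main obstacle is the paraproduct bookkeeping for the coupled magnetic flux: after the cancellation of the Lorentz and stretching terms one must check that every surviving commutator is genuinely concentrated near and above frequency $N$ (so the little-$o$ condition applies) and that all three magnetic terms collapse onto the single scaling exponent $1-\alpha-2\beta$ --- this is where the asymmetric condition $\alpha+2\beta\ge1$ comes from, reflecting that in the magnetic flux at most one derivative ever falls on $u$.
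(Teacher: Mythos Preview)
The paper does not supply its own proof of this theorem; it is a survey and merely attributes the result to Kang and Lee \cite{KangLee}, noting that they ``used an approach similar to that of Cheskidov et al.'' Your proposal is exactly that approach --- the Littlewood--Paley refinement of the Constantin--E--Titi commutator argument, transplanted to the coupled MHD system --- and the outline is correct: the cancellation between the Lorentz term in the momentum equation and the stretching term in the induction equation leaves only trilinear commutator fluxes, the kinetic one scaling like $2^{q(1-3\alpha)}$ and the three magnetic ones like $2^{q(1-\alpha-2\beta)}$, and the $c(\Nbb)$ hypothesis handles the endpoint cases. So there is nothing to compare against beyond the paper's one-line description, and your sketch matches it.
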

Similar results can be proven regarding conservation of magnetic helicity and cross-helicity, see \cite{KangLee} for details.

\section{General conservation laws}\label{secgeneral}
The reader will have noticed similarities in the statements regarding sufficient regularity conditions guaranteeing energy/entropy conservation for the aforementioned systems of equations of fluid dynamics. Especially the differentiability exponent of $\frac{1}{3}$ is a recurring condition. One might therefore anticipate that a general statement could be made, which would cover all the above examples and more. Indeed it is possible, as shown recently by Gwiazda et al. in \cite{GMSG}. Using nomenclature and notation as in \cite{dafermos}, they consider a general conservation law of the form
\begin{equation}\label{generalconservlaw}
\diverg_X(G(U(X))) = 0.
\end{equation}
Here $U:\Xcal\to\Ocal$ is an unknown vector-valued function and $G:\Ocal\to\Mbb^{n\times(d+1)}$ is a given matrix field, where $\Xcal$ is an open subset of $\R^{d+1}$ or $\T\times\R$ and the set $\Ocal$ is open in $\R^n$. It is easy to see that any classical solution to ~\eqref{generalconservlaw} satisfies also
\begin{equation}\label{companionlaw}
\diverg_X(Q(U(X))) = 0,
\end{equation}
where $Q:\Ocal\to\R^{s\times(d+1)}$ is a smooth function such that
\begin{equation}\label{companionrelation}
D_UQ_j(U) = \Bfrak(U)D_UG_j(U),\;\;\text{for all}\;U\in\Ocal,\;j\in{0,\cdots,k},
\end{equation}
for some smooth function $\Bfrak:\Ocal\to\Mbb^{s\times n}$.
The function $Q$ is called a \emph{companion} of $G$ and equation ~\eqref{companionlaw} is called a \emph{companion law} of the conservation law ~\eqref{generalconservlaw}. In applications mentioned in previous sections the companion law would be the energy equality. 
The following theorem, proved in \cite{GMSG}, answers the question of how much regularity of a weak solution to ~\eqref{generalconservlaw} is required so that it also satisfies the companion law ~\eqref{companionlaw}.
\begin{theorem}
Let $U\in B_3^{\alpha,\infty}(\Xcal;\Ocal)$ be a weak solution of ~\eqref{generalconservlaw} with $\alpha>\frac13$. Assume that $G\in\Crm^2(\Ocal;\Mbb^{n\times(d+1)})$ is endowed with a companion law with flux $Q\in \Crm(\Ocal;\Mbb^{s\times (d+1)})$ for which there exists  $\Bfrak\in\Crm^1(\Ocal;\Mbb^{s\times n})$ related through identity \eqref{companionrelation} and the essential image of $U$ is compact in ~$\Ocal$.\\
Then $U$ is a weak solution of the companion law \eqref{companionlaw}  with the flux $Q$.
\end{theorem}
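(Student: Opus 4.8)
The plan is to mimic the commutator argument of Section~\ref{secSimple}, now carried out in the full $(d+1)$-dimensional variable $X$ rather than in one space variable. First I would fix a standard mollifier $\theta^\eps$ on $\R^{d+1}$ (or on $\T\times\R$, adapting near the singular direction as in the scalar case) and set $U^\eps = U *_X \theta^\eps$. Since $U$ is a distributional solution of $\diverg_X(G(U))=0$ and mollification commutes with $\diverg_X$, we get $\diverg_X\big((G(U))^\eps\big)=0$, which we rewrite as
\begin{equation*}
\diverg_X\big(G(U^\eps)\big) = \diverg_X\big(G(U^\eps) - (G(U))^\eps\big) =: \diverg_X \mathcal{E}_\eps .
\end{equation*}
Because $U^\eps$ is smooth and its essential range stays in a compact subset of $\Ocal$ for $\eps$ small (using compactness of the essential image of $U$), the companion relation \eqref{companionrelation} applies pointwise to $U^\eps$: multiplying the identity above by $\Bfrak(U^\eps)$ and using the chain rule together with $D_U Q_j = \Bfrak D_U G_j$ yields
\begin{equation*}
\diverg_X\big(Q(U^\eps)\big) = \Bfrak(U^\eps)\,\diverg_X \mathcal{E}_\eps .
\end{equation*}
Testing against $\phi\in\Crm_c^\infty(\Xcal)$ and integrating by parts moves the derivative off $\mathcal{E}_\eps$, producing a remainder
\begin{equation*}
R_\eps = -\int_{\Xcal} \mathcal{E}_\eps : \big(D_U\Bfrak(U^\eps)\,\nabla_X U^\eps\,\phi + \Bfrak(U^\eps)\,\nabla_X\phi\big)\,\mathrm{d}X .
\end{equation*}

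The heart of the argument is the estimate on the commutator $\mathcal{E}_\eps = G(U^\eps) - (G(U))^\eps$. This is exactly the step carried out in \eqref{Taylor1}--\eqref{nonlinearestimate}, only with the scalar $f$ replaced by the matrix field $G\in\Crm^2(\Ocal;\Mbb^{n\times(d+1)})$ and with Taylor's theorem applied componentwise around $U(X)$; the quadratic remainder control is uniform because $G$ is $\Crm^2$ on a neighbourhood of the (compact) essential range of $U$. One obtains the pointwise bound
\begin{equation*}
|\mathcal{E}_\eps(X)| \le C\left[\,|U^\eps(X)-U(X)|^2 + \big(|U(\cdot)-U(X)|^2 *_Y \theta^\eps\big)(X)\,\right],
\end{equation*}
with $C$ depending only on $\|G\|_{\Crm^2}$ on that neighbourhood. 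Then the Besov-space estimates \eqref{besovshift}--\eqref{besovgrad}, now stated for functions on $\Xcal\subset\R^{d+1}$ in $B_3^{\alpha,\infty}$, give $\|\mathcal{E}_\eps\|_{L^{3/2}(\Xcal)} \le C\eps^{2\alpha}\|U\|_{B_3^{\alpha,\infty}}^2$ and $\|\nabla_X U^\eps\|_{L^3(\Xcal)}\le C\eps^{\alpha-1}\|U\|_{B_3^{\alpha,\infty}}$. Since $\Bfrak\in\Crm^1$ and its argument $U^\eps$ again stays in a fixed compact set, $D_U\Bfrak(U^\eps)$ and $\Bfrak(U^\eps)$ are bounded, so H\"older's inequality gives $|R_\eps| \le C\,\|\phi\|_{\Crm^1}\big(\eps^{3\alpha-1} + \eps^{2\alpha}\big)\|U\|_{B_3^{\alpha,\infty}}^2 \to 0$ precisely when $\alpha > \tfrac13$.

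It remains to pass to the limit on the left-hand side. Standard mollifier convergence gives $U^\eps \to U$ in $L^3_{loc}$, hence (by continuity of $Q$ and uniform boundedness of the arguments) $Q(U^\eps)\to Q(U)$ in $L^1_{loc}$, so $\int \diverg_X(Q(U^\eps))\,\phi = -\int Q(U^\eps):\nabla_X\phi \to -\int Q(U):\nabla_X\phi$. Combining with $R_\eps\to 0$ yields $\int_{\Xcal} Q(U):\nabla_X\phi\,\mathrm{d}X = 0$ for every $\phi\in\Crm_c^\infty(\Xcal)$, i.e. $U$ is a weak solution of \eqref{companionlaw}. The main obstacle I anticipate is bookkeeping the matrix/tensor contractions so that the $\Crm^2$ regularity of $G$ and the $\Crm^1$ regularity of $\Bfrak$ are used exactly where the scalar proof uses $\eta\in\Crm^2$ and $f\in\Crm^2$ — in particular checking that only \emph{one} derivative of $\Bfrak$ (the analogue of $\eta''$) is needed after the integration by parts, and that the compact-essential-image hypothesis is genuinely what makes all the Taylor constants and composition bounds uniform; near the singular boundary of $\T\times\R$ one also has to localise the mollification as in the scalar case, but this is routine.
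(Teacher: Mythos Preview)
Your proposal is correct and follows exactly the strategy the paper advertises: the paper does not give its own proof of this theorem but refers to \cite{GMSG}, and your argument is precisely the natural generalisation of the scalar commutator proof of Theorem~\ref{thmsimple} to the vector-valued, $(d+1)$-dimensional setting, with $G\in\Crm^2$ playing the role of $f\in\Crm^2$ in the Taylor estimate and $\Bfrak\in\Crm^1$ playing the role of $\eta'\in\Crm^1$. The one point worth flagging is your claim that the essential range of $U^\eps$ stays in a compact subset of $\Ocal$: since $U^\eps(X)$ lies in the \emph{convex hull} of nearby values of $U$ and $\Ocal$ need not be convex, you should either observe that it suffices to extend $G$ and $\Bfrak$ to $\Crm^2$/$\Crm^1$ functions on all of $\R^n$ (possible since only their values on the compact essential image matter), or use the convex hull of that compact set as the reference domain; otherwise the argument is complete and matches the intended one.
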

Notice that, perhaps not surprisingly, the generality of the above theorem is achieved at the expense of optimality of the assumptions. Given additional information on the structure of the problem at hand one might be able to relax some of these assumptions, as discussed in the previous sections. Let us mention that the theorem provides for instance a conservation of energy result for the system of polyconvex elastodynamics.

\section{Acknowledgements}
This work was partially supported by the Simons -- Foundation grant 346300 and the Polish Government MNiSW 2015-2019 matching fund.
T.D acknowledges the support of the National Science Centre, DEC-2012/05/E/ST1/02218. The research was partially supported by the Warsaw Center of Mathematics and Computer Science. P.G and A.\'S-G received support from the National Science Centre (Poland), 2015/18/M/ST1/00075.

\end{document}